\mathchardef\isinpunto="0010
\newcommand{\ds}{\displaystyle}
\newcommand{\real}{I\!\! R}
\newcommand{\B}[1]{\boldsymbol{#1}}
\newcommand{\abs}[1]{\lvert#1\rvert}
\def\blok{\hspace*{\fill}{$\Box$}\vspace*{0.5cm}}
\theoremstyle{plain}
\newtheorem{proposition}{\bf Proposition}
\newtheorem{theorem}{\bf Theorem}
\theoremstyle{definition}
\newtheorem{definition}{\bf Definition}
\newtheorem{example}{\bf Example}
\newtheorem{remark}{\bf Remark}
\title{\baselineskip .2in
\bf The Shapley Group Value\footnote{This research has been
supported by I+D+i research project  MTM2011-27892 from the
Government of Spain.}}
\begin{document}
\setlength{\abovedisplayskip}{5mm} %7
\setlength{\abovedisplayshortskip}{3mm} %5
\setlength{\belowdisplayskip}{6mm} %9
\setlength{\belowdisplayshortskip}{5mm} %8

\maketitle \vspace*{-1cm} {\baselineskip .2in
\begin{center}
{\small
\begin{tabular}{l} \bf Ram\'on Flores
\\ Departamento de Matem\'aticas, Universidad Aut\'onoma de Madrid,  Spain
\\ e-mail: \url{ramon.flores@uam.es}
\\[2mm]
\bf Elisenda Molina
\\ Departamento de Estad\'{\i}stica, Universidad Carlos III de Madrid, Spain
\\ e-mail: \url{elisenda.molina@uc3m.es}
\\[2mm]
\bf Juan Tejada
\\ Instituto de Matem\'atica Interdisciplinar (IMI), Departamento de Estad\'{\i}stica e Investigaci\'on Operativa,
\\ Universidad Complutense de Madrid, Spain
\\  e-mail: \url{jtejada@mat.ucm.es}
\end{tabular}}
\end{center}

\begin{abstract}
Following the original interpretation of the Shapley value (Shapley, 1953a) as a priori evaluation of the prospects of a player in a
multi-person interaction situation, we propose a {\em group value}, which we call {\em the Shapley group value}, as a priori evaluation of the prospects of a group of players in a coalitional game when acting as a unit. We study its properties and we give an axiomatic characterization. Relaying on this valuation we analyze the profitability of a group. We motivate our proposal by means of some relevant applications of the Shapley group value, when it is used as an objective function by a decision maker who is trying to identify an optimal group of agents in a framework in which agents interact and the attained benefit can be modeled by means of a transferable utility game. As an illustrative example we analyze the problem of identifying the set of key agents in a terrorist network.

\textbf{Keywords}: Game Theory, TU games, Shapley value, group values.
\end{abstract}

\section{Introduction}

One of the most important one-point solution concepts in the framework of coalitional games with side payments is the Shapley value (Shapley, 1953a), which proposes an allocation of resources in multiperson interactions taking into account the power of players in their various cooperation opportunities. Since the pioneering work of Lloyd S. Shapley, many different values have been proposed, as for example the nucleolus (Schmeidler, 1969);  the $\tau$-value (Tijs, 1981) or the least square values (Ruiz, Valenciano and Zarzuelo, 1998). There is also a vast literature focused on extensions, modifications and generalizations of the Shapley value: weighted Shapley values (Shapley, 1953b); semi-values (Dubey, Neyman and Weber, 1981); the value for large games (Aumann and Shapley, 1974); values for NTU games which generalize the Nash bargaining solution (Nash, 1950) and the Shapley value (Harsanyi, 1963; Shapley, 1969; Maschler and Owen, 1992); the Myerson value for games with graph restricted communication (Myerson, 1977), values for games with coalition structures (Aumann and Dr\`{e}ze, 1974; Owen, 1977; Hart and Kurz, 1983), and others.

In this paper, we propose an extension of the Shapley value inspired in the question originally addressed by Shapley in his seminal paper (see Hart 1987): {\em  How would one evaluate the prospects of a player in a multiperson interaction, that is, in a game?} Following this interpretation, we propose a {\em group value} as a priori evaluation of the prospects of a group of players in a multi-person game when acting as a group, which takes into account the power of groups in their various cooperation opportunities without imposing on the other agents any concrete coalition structure. Mathematically, our proposal is related with the Shapley value of certain quotient games (Owen, 1977), also called merging games by Derks and Tijs (2000), which capture the situation when all the members of a group commit themselves to bargain with the others as a unit. It is remarkable however that they do not deal with the problem of assigning values to groups.

A key observation in our proposal is that we do not need to suppose necessarily that the players know each other nor agree to act jointly; instead, we assume the existence of an {\emph external agent}, the decision maker, that is able to coordinate the actions of the members of the group. This is the case for instance of terrorist organizations like Al Qaeda, or secret societies; in which there exists a leader (or a set of leaders) who sends a common signal that all the agents in the group are willing to follow. In this work we describe alternative situations in which this type of external coordination occurs.

Following Roth (1977), the ultimate aim of our proposal is to serve as an utility function for a decision maker who is trying to find an optimal group under certain conditions - for instance, of a given size- in those situations in which agents are immersed in a cooperative game. To be specific, we illustrate its applicability in three different settings which share two relevant features: $(i)$ the objective is the selection of an optimal group, rather than the best individual; and $(ii)$ the performance of a group depends on its interaction with the rest of agents. In this context, to maximize the characteristic function entails a too restrictive assumption over the rest of agent's behavior, i.e., only one scenario (mainly, the worst one) is evaluated. On the contrary, we show that maximizing the {\em value of a group} allows us to consider a more general setting, in which more than one scenario concerning other agents' actions is taken into account.

Section \ref{motivating-examples} is devoted to a general presentation of the problem we deal with. We first introduce some standard concepts and notation on Game Theory that will be used throughout this paper, and then we describe three different cases in which the need for a group valuation arises. In Section \ref{def-axiomatizacion} we introduce the notion of {\em group value} and we define our proposal, which we call the {\em Shapley group value}. We give an axiomatic characterization for it and we analyze its properties. In Section \ref{profitability} we analyze the profitability of a group by means of comparing its valuation as a group with the sum of its members' individual Shapley values.
 In Section \ref{ejemplos} we explore one of its potential applications previously considered in Section \ref{motivating-examples}.  Section \ref{conclusions} concludes the paper.

{\bf Acknowledgements.} We would like to warmly thank Javier Castro (Universidad Complutense de Madrid) for the simulation program used to obtain the numerical results in Section \ref{ejemplos}.

\section{Motivation and notation}
\label{motivating-examples}

A cooperative game in coalitional form with side payments, or with
 transferable utility, is an ordered pair $(N,v)$, where
 $N$ is a finite set of players and $v:2^N\rightarrow \real$, with $2^N=\{ S \,\vert \, S\subset N\}$, is a {\em characteristic function} on $N$ satisfying $v(\emptyset)=0$. For any coalition $S\subset N$,  $v(S)\in \real$ is the {\em worth} of coalition $S$ and  represents the reward that coalition $S$ can achieve by itself if all its members act together. Since we will restrict to the case of TU games in the sequel, we will refer to them simply as {\em games}. For brevity, throughout the paper, the cardinality of sets (coalitions) $N, S$ and $C$ will be denoted by appropriate small letters $n,s$ and $c$, respectively. Also, for notational convenience, we will write singleton $\{ i\}$ as $i$, when no ambiguity appears.

Let ${\cal U}=\{1,2,\dots\}$ be the {\em universe of players}, and let ${\cal N}$ be the class of all non-empty finite subsets of ${\cal U}$. For an  element $N$ in ${\cal N}$, let ${\cal G}_N$ denote the set of all characteristic functions on player set $N$, and let ${\cal G}=\displaystyle\bigcup_{N\in {\cal N}} {\cal G}_N$ be the set of all characteristic functions.\footnote{We will use interchangeably the two terminologies, game and characteristic function, when no ambiguity appears.} A {\em value} $\varphi$ is an assignation which associates to each game $v\in {\cal G}_N$, $N\in {\cal N}$, a real vector $\varphi(N,v)=(\varphi_i(N,v))_{i\in N}\in \mathbb{R}^N$, where $\varphi_i(N,v)\in \mathbb{R}$ represents the {\em value} of player $i$, $i\in N$. Shapley (1953a) defines his value as follows:
\begin{equation}
\displaystyle
 \phi_i (N,v) = \sum_{S\subset N\setminus i} \frac{s!(n-s-1)!}{n!} \bigl( v(S\cup \{ i\}) -
v(S)
 \bigr ), \quad i\in N.
\label{shapley-marginal}
\end{equation}
 The {\em value} $\phi_i (N,v)$ of each player, which is a weighted average of his marginal contributions, admits different interpretations, such as the {\em payoff} that player $i$ receives when the Shapley value is used to predict the allocation of resources in multiperson interactions, or his {\em power} when averages are used to aggregate the power of players in their various cooperation opportunities.  As announced in the introduction, our proposal for measuring the value of a group is based on the question originally addressed by Shapley in his seminal paper: we interpret the value as the {\em expectations} of a player (group) in a game $(N,v)$. In other words, $\phi_i(N,v)\in \mathbb{R}$ is an {\em a priori} value that measures the prospects of player $i\in N$ in the game $v\in {\cal G}_N$, and can be used as an objective function for selecting {\em key players}.\footnote{Note that in general $v(i)\neq \phi_i(N,v)$.}  The approach just discussed is undertaken in the next three cases, already considered in the literature:

\begin{enumerate}[$(i)$]
\item
In Lindelauf, Hamers and Husslage (2013), the authors introduce a game-theoretic approach to identify the key players in a terrorist network. They considered four different weighted extensions of the {\em connectivity game} (Amer and Gimenez, 2004) to capture the structure of the terrorist organization as well as additional individual information about the terrorists, and then they proposed to calculate the Shapley value of each game in order to identify the key players. In Section \ref{ejemplos}, where we analyze in detail this application, we recover the formal definitions of those games.
\item
In Narayanam and Narahari (2011), the authors also introduce a game-theoretic approach to address the {\em target set selection problem} in the framework of diffusion of information. They consider the {\em linear threshold model} (Schelling (1971), Granovetter (1978), and
 Kempe, Kleinberg and Tardos (2005)) to model the role of the social structure in the sharing of information and the formation of opinions dynamics. The propagation process in this case is characterized by  a set of agents $N$, which are connected through a social network. The flow of available information is captured by a weight matrix $\B{W}$ whose entries are understood as \emph{influence weights}; in particular,  $w_{ij}$ quantifies the weight that agent $i$ assigns to agent $j$. It is assumed that these weights are normalized in such a way that $\sum_{j\in N_i} w_{ij} \leq 1$, where $N_i$ represents the set of neighbors of agent $i$, for all $i\in N$. In this model, it is also assumed that each agent has two possible states: {\em active}, if he has adopted the information that is being propagated, and {\em inactive} otherwise. From a dynamic point of view, it is assumed that the status of the agents may change as time goes by. At each date, agents communicate with their neighbors in the social network and update their state. The updating process is simple: all agents that were active in step $(t-1)$ remain active at step $t$; and every inactive agent at step $(t-1)$ becomes active if the sum of the weights of his active neighbors' from the previous period is at least $\theta_i$, a threshold which represents the weighted fraction of the neighbors of $i$ that must become active in order to activate agent $i$.

The authors define a game $(N,v)$, where the value of coalition $S$ is defined to be the expected number of active nodes at the end of the diffusion process when initially all agents in $S$ are active, whereas all agents in $N\setminus S$ are inactive, and assuming all thresholds $\theta_i$ are chosen uniformly at random from the interval $[0,1]$ initially. Then, they propose to calculate the Shapley value of the game in order to rank the agents. Taking into account that the $k$ agents with highest Shapley value are not in general the optimal set of $k$ agents, they propose an heuristic procedure, based on the Shapley value of each agent and the social network structure, to select the  {\em key set} of $k$ agents.
\item
In Conklin, Powaga and Lipovetski (2004) the authors introduce a game-theoretic approach for the identification of sets of key drivers in customer satisfaction analysis, on the basis of Kano's theory (Kano {\it et al.}, 1984) of the relationship between product quality and customer satisfaction, and the information given by a random sample of customers. The attributes take the role of the players, and the characteristic function of the game measures the ability of every group of attributes to predict the dissatisfaction level of customers. To be specific,
$$
v(S)=\underbrace{P\{ \sum_{i\in S} M_i>0 \, /\, D=1\}}_{reach} - \underbrace{P\{ \sum_{i\in S} M_i>0 \, /\, D=0\}}_{noise},
$$
where $M_i\in\{0,1\}$ takes the value 1 if attribute $i$ has failed, and 0 otherwise; and $D\in\{ 0,1\}$ reflects the overall dissatisfaction ($D=1$) with the product. The above probabilities are estimated through the opinions of a random sample of customers as proportions of the failed within those who are dissatisfied ({\em reach}) and non-dissatisfied ({\em noise}). Then, the authors propose to use the Shapley value for ordering the attributes. Then, they add attributes to the list of key dissatisfiers following that order until a point where the added noise overwhelms the added reach. This problem is in fact a particular instance of a  multicriteria decision problem, in which the aim is to rank or score alternatives
according to several (often conflicting) points of view, called criteria (see Grabisch {\it et al.}, 2002). In this context Pint\'er (2011) uses the Shapley value for selecting the predictors in a linear regression model.
\end{enumerate}

Note that in the examples considered above, there exists in fact an {\em external decision maker} who is interested in finding an optimal {\em group} of agents, rather that an optimal agent:

\begin{enumerate}[$(i)$]
\item
 In the first example the police wants to identify a small group of terrorists to neutralize in order to break up the criminal organization. Or, it could be the case, that they were interested in selecting a small group of criminals to mislead in order to optimally diffuse their own information through the network (by using them as seed). In the first case, the goal could be to find the group of terrorists of a given size $1\leq k <n$ whose removing turns into a maximum reduction of the criminal activity. In the second case, the goal could be to find a group of minimum size that achieves a given percentage of information spreading. In both cases, the police needs an {\em objective function} which evaluates the a priori performance of every group in each of the two settings.
\item
In the {\em target set selection problem} of example two, the goal is to find a set of $k$ key agents that would maximize the spreading of information through the network. Thus, we need again an objective function which measures the a priori ability of each group to spread information.
\item
In Conklin, Powaga and Lipovetski (2004), where the authors' goal is  the identification of {\em sets} of $k$ key drivers in customer satisfaction analysis, the need for a objective function which gives a priori valuation of each group of attributes to predict the dissatisfaction level of customers is also clear. Again the problem is to find the group of $k$ drivers which maximizes this objective function.
\end{enumerate}

It must also be remarked that, analogously to what happens for the individual case, in which $v(i)$ is not a proper valuation of the performance of player $i\in N$ in the game, a direct use of $v(S)$ to measure the a priori value of group $S$ is not in general the best approach to solve this problem. For instance, to maximize $v(\cdot)$ in the second example implies a pessimistic scenario in which none of the agents out of coalition $S$ whose diffusion power is being evaluated adopts the product spontaneously. The same argument remains valid for the other two situations considered above.

Thus, since measuring the expected value of a group is a relevant question, and taking into account that the $k$ more valuable agents (from the individual point of view) do not form in general the most valuable group of $k$ agents, the need for a specific group valuation is clear.

\section{The Shapley group value: definition and axiomatic characterization}
\label{def-axiomatizacion}

We aim to define an appropriate {\em group valuation} which can be used as an utility function for an {\em external decision agent} who is trying to select a group of players according to some specified goal. In this section we define our proposal, we study its properties and we characterize it axiomatically.

In this setting, a \emph{group value} $\xi^g$ is a mapping that to any non-empty and finite player set $N\in {\cal N}$, any game $v\in {\cal G}_N$, and any group $C$ with $C\subset N\in {\cal N}$, associates a real number $\xi^g(C;N,v)$, satisfying $\xi^g (\emptyset;N,v) =0$. $\xi^g(C;N,v)\in\real$ is the {\em value} of group $C$ and measures the prospects of group $C$ in $v$ when players in $C$ act as a unit.

The properties of a characteristic function hold for $\xi^g(\cdot;N,v)$, but the interpretation we give of every value $\xi^g(C;N,v)$, $C\subset N$, does not coincide with the usual interpretation of the worth $v(C)$ of coalition $C$ in a game $v\in {\cal G}_N$. As it happens in the case of the Shapley value of the game, the group value of a game determines a different utility function for games which is, in words of Roth (1977), ``compatible with the existing utility function that defines the game''.
   In fact, following the Shapley value approach, we propose to evaluate the performance of a group averaging its marginal contribution over all possible scenarios. Going back to example two, when evaluating the diffusion power of group $S$ we take into account all feasible scenarios in which some of the remaining agents adopt the product spontaneously.

First, in order to define our proposal we must consider what group integration means for the applications we have in mind. In this framework, group integration does not necessarily imply that agents in $C$ make an agreement to act jointly.  For instance, going back to the diffusion of information case, there exists a external agent who can activate the nodes that are used as seeds to diffuse the innovation through the network, and the activated nodes are not in general aware about the other selected seeds' identities. The same occurs when the police selects a group of terrorists to turn back into double agents, or to misinform in order to spread their misinformation through the criminal organization network. Therefore, when measuring group $C$'s expectations we will evaluate them like a {\em unit} anyway, and we will adopt the {\em merging of players} approach of Derks and Tijs (2000), who analyze the {\em profitability} of group formation in a more general setting (but not, as said above, the problem of assigning values to groups). In that case all the agents of $C$ are replaced by a single player $\mathbf{c}$, who can act as a proxy of any agent in $C$.

Formally, let $v\in {\cal G}_N$, and let $C\subset N\in {\cal N}$ be any non-empty coalition. Now, let us consider the so called {\em $C$-partition}, denoted by  $\mathcal{P}_C$, consisting of the compartments $C$ and the one-person coalitions of players outside $C$. Then,  the {\em merging game} with respect to $\mathcal{P}_C$ is the $(n-c+1)$-person cooperative game $(N_C,v_C)$, where the
agent set $N_C=(N\setminus C)\cup \{ \mathbf{c}\}$ with $\mathbf{c}$ as a single proxy player $\mathbf{c}\equiv C$, and $v_C$ is of the form:
\begin{equation}
v_C(S)=\begin{cases} v(S), & \text{ if $\mathbf{c}\notin S$,}
\\
v(S\cup C) & \text{ if $\mathbf{c}\in S$,}
\end{cases} \quad \forall \; S \subset N_C.
\label{merging-game}
\end{equation}

\begin{definition}
The {\em Shapley group value} is the group value that assigns for every $v\in{\cal G}_N$, $N\in {\cal N}$, the valuation mapping $\phi^g(\cdot; N,v)$ given by:
$$
\phi^g(C;N,v)=\phi_{\mathbf{c}} (N_C,v_C), \text{ for each group $\emptyset\neq C\subset N$},
$$
where $(N_C,v_C)$ is the merging game with respect to $C$.
\label{group-Shapley-def}
\end{definition}

Since each $\phi^g(C;N,v)$ is obtained by applying the Shapley value to a merging game, it is remarkable (and somewhat obvious)  that for every coalition with at least two players the corresponding merging game is different. One-person coalitions $C_1=\{i\}$ and $C_2=\{j\}$ are the unique cases in which the two merging games,  $(N_{C_1},v_{C_1})$ and $(N_{C_2},v_{C_2})$, are the same for two different groups $C_1$ and $C_2$. Trivially these two merging games coincide with $(N,v)$.

Our goal is to find an axiomatic characterization of the defined Shapley group value. We first propose and analyze some relevant properties of a group value, which extend to this setting the main properties involved in the many axiomatic approaches provided for the Shapley value.

We first recall some definitions. A game $(N,v)$ is a \emph{unanimity game} if there exists a coalition $S\subset N$ such that for every $T\subset N$, $v(T)=1$ if $S\subset T$, and $v(T)=0$ otherwise. In this case, the game is usually denoted by $(N,u_S)$. Unanimity games are a basis of the vector space ${\cal G}_N$ of all games with the player set $N$. Also recall that given $v\in {\cal G}_N$, $i\in N$ is a {\em dummy player} if $v(S\cup i)=v(S)+v(i)$ for all $S\subset N$. A dummy player with $v(i)=0$ is said to be a {\em null player} in $v$. A game $v\in {\cal G}_N$ is {\em monotonic} if $v(T)\geq v(S)$ for all $S\subset T\subset N$.

\subsection*{Properties}

Let $\xi^g$ be a group value defined over the set of all games ${\cal G}$. Then, $\xi^g$ verifies:
\begin{enumerate}[($P1$)]
\item
{\it  $G$-coherence}, if $\sum_{i\in N}\xi^g( i;N,v)=\xi^g(N;N,v)$ for all $N\in {\cal N}$ and $v\in {\cal G}_N$;
\item
{\it $G$-dummy player}, if $\xi^g({C\cup i};N,v)=\xi^g ({C};N,v)+v(i)$ for all $C\subsetneq N\in {\cal N}$, $i\in N\setminus C$ and $v\in {\cal G}_N$, whenever  $i$ is a dummy player;
\item
{\it $G$-null player}, if $\xi^g({C\cup i};N,v)=\xi^g(C;N,v)$ for all $C\subsetneq N\in {\cal N}$, $i\in N\setminus C$ and $v\in {\cal G}_N$, whenever  $i$ is a null player;
\item
{\it $G$-anonymity}, if for all $C\subset N\in {\cal N}$ and for all permutations $\pi$ of the player set $N$, $\xi^g({\pi(C)};\pi(N), \pi v)=\xi^g(C;N, v)$, where $\pi v(S):= v(\pi(S))$, and being $\pi(S)=\{ \pi(i)\ | i\in S\}$;
\item
{\it $G$-linearity}, if $\xi^g(C;N,\alpha_1 v+ \alpha_2 w)=\alpha_1\xi^g(C;N,v)+\alpha_2\xi^g(C;N,w)$ for all $C\subset N\in {\cal N}$,
 $\alpha_1,\alpha_2\in \mathbb{R}$, and games  $v,w\in {\cal G}_N$,  where $\alpha_1 v+ \alpha_2 w \in {\cal G}_N$ is given by $(\alpha_1 v+ \alpha_2 w)(S)=\alpha_1v(S)+\alpha_2w(S)$ for all $S\subset N$;
\item
{\it $G$-coalitional balanced contributions} (or $G$-CBC for short), if for all $C\subsetneq N \in {\cal N}$, $i,j\in N\setminus C$ and $v\in {\cal G}_N$, we have
\begin{multline}
(\xi^g({C\cup i};N,v)-\xi^g(C;N,v))-(\xi^g({C\cup i};N\backslash j,v_{-j})-\xi^g(C;N\backslash j,v_{-j})) =
\\[3mm]
(\xi^g({C\cup j};N,v)-\xi^g(C;N,v))-(\xi^g({C\cup j};N\backslash i,v_{-i})-\xi^g(C;N\backslash i,v_{-i})),
\label{CBC-larga}
\end{multline}
where $v_{-i}\in {\cal G}_{N\setminus i}$ stands for the restriction of the characteristic function $v$ to the set of players $N\backslash i$;
\item
{\it $G$-symmetry over pure bargaining games} (or $G$-SPB for short), if $\xi^g(C;N,u_N)=\frac{1}{n-c+1}$ for each non-empty $C\subset N\in {\cal N}$, where $(N,u_N)$ is the unanimity game with respect to the grand coalition;
\item
{\it $G$-coalitional monotonicity}, if $\xi^g(C;N,v)\leq \xi^g(C;N,w)$ for all $C\subset T \subset N\in {\cal N}$ and for all games $v,w\in {\cal G}_N$ such that $v(S)= w(S)$ for all $S\neq T$ and $v(T)<w(T)$;
\item
{\it $G$-strong monotonicity}, if  $\xi^g(C;N,v)\leq \xi^g(C;N,w)$ for all $C\subset N\in {\cal N}$, and for all games $v,w\in {\cal G}_N$ for which $v(S\cup C)-v(S)\leq w(S\cup C)-w(S)$ for all $S\subset N\setminus C$.
\item
{\it $G$-positivity}, if $\xi^g(C;N,v)\geq 0$ for all $C\subset N\in {\cal N}$ and any monotonic game $v\in {\cal G}_N$;
\item
{\it $G$-relative invariance with respect to strategic equivalence}, if $\xi^g(C;N,w)=a\xi^g(C;N,v)+\sum_{i\in S} b_i$ for
 all $C\subset N\in {\cal N}$, $v\in G_N$, $a>0$ and $b\in \mathbb{R}^n$, where $w\in G_N$ is given by $w(S)=av(S)+\sum_{i\in S} b_i$ for all $S\subset N$;
\item
{\it $G$-coalitional strategic equivalence}, if $\xi^g(C;N,v)=\xi^g(C;N,v+\lambda u_T)$ for all $C\subsetneq N\in {\cal N}$, $\emptyset \neq T\subset N\setminus C$, $\lambda\in \mathbb{R}$ and $v\in {\cal G}_N$;
\item
{\it $G$-fair ranking}, if for all $T\subset N\in {\cal N}$ and every pair of games $v,w\in {\cal G}_N$ such that $v(S)=w(S)$  for all $S\neq T$, $\xi^g ({C_1};N,v) > \xi^g ({C_2};N,v)$ implies $\xi^g ({C_1};N,w) > \xi^g ({C_2};N,w)$ for all $C_1,C_2 \subset T$ with $\abs{C_1}=\abs{C_2}$.
\end{enumerate}

Regarding the fact that $(N,\xi^g(\cdot;N,v))$ could be formally considered a game in ${\cal G}_N$, some of the previous properties can be interpreted in terms of games. For instance, properties $P2$ and $P3$ can be restated as ``if $i$ is a dummy (null) player in $(N,v)$, then it is a dummy (null) player in $(N,\xi^g(\cdot;N,v))$''.

$G$-coherence is inspired in the coherence property considered  by Calvo and Guti\'errez (2010) in the setting of games with coalition structures. Coherence, which was called {\em coalitional structure equivalence}  in Albizuri (2008) (see also Hart and Kurz, 1983), requires the value to give the same both when all the players are joined and when nobody is joined.

$G$-dummy, $G$-null, $G$-anonymity, $G$-linearity, and $G$-relative invariance with respect to strategic equivalence generalize their counterparts for individual values for general games. Note that linearity of the group value is closely related to the following property, which is consistent with the interpretation of the group value as the expected utility of playing a game: ``the group value of a probabilistic mixture of two games (i.e., with probability $p$ the game $v$ is played, and with probability $p'=1-p$, the game $w$ is played) is the same mixture of the group values of the two games'' (Hart and Kurz, 1983).

$G$-coalitional strategic equivalence, which generalizes $G$-relative invariance with respect to strategic equivalence to coalitions $T\subset N$ with $\abs{T}\geq 2$, adapts the same property of individual values, introduced by Chun (1989) to characterize the Shapley value by adding triviality (the value of a null game is the null vector), efficiency and fair ranking -also defined by Chun (1989), and that can be generalized to $G$-fair ranking-.

$G$-coalitional monotonicity and $G$-strong monotonicity generalize the corresponding monotonicity properties considered by Young (1985), who characterizes the Shapley value by means of efficiency, symmetry and strong monotonicity. Moreover, $G$-positivity extends positivity, introduced by Kalai and Samet (1987).

$G$-coalitional balanced contributions, which together with $G$-symmetry over pure bargaining games, plays a crucial role in the axiomatic characterization of the Shapley group value, generalizes the balanced contribution property which adding efficiency characterizes the Shapley value (Myerson, 1977). $G$-CBC states that for any group $C\subset N\setminus \{ i,j\}$, the impact of player $j$'s presence over the marginal contribution of player $i$ to the value of group $C$ equals the impact of  player $i$'s presence over the marginal contribution of player $j$ to the value of the same group $C$. This idea generalizes the original balanced contribution property of Myerson: ``{\em $\dots$ that player $i$ contribute to player $j$'s payoff what player $j$ contributes to player $i$'s payoff.}'' (Winter, 2002).  Note that in this case payoffs refer to players' values, and  therefore the original property is a particular case of $G$-CBC for $C=\emptyset$.

$G$-SPB  is related to the {\it neutrality to strategic risk} property considered by Roth (1977) to characterize the Shapley value as a utility function. Let $\succsim$ be a preference relation over games and positions, where an element $(v,i)\in {\cal G}_N\times N$ represents the position $i\in N$ in the game $v\in {\cal G}_N$ that the decision maker is evaluating (and comparing with other positions and other games). In this setting, Roth distinguishes between two kinds of risks: {\em ordinary risk}, involving the uncertainty which arises from the chance mechanism involved in lotteries; and {\em strategic risk}, that involves the uncertainty which arises from the interaction in a game of the strategic players.\footnote{Roth (1977) only considers as strategic players those who are not null.} Roth (1977) states that a preference relation $\succsim$ is {\em neutral to strategic risk} if $(u_S,i)\sim (\frac{1}{s}u_{\{i\}},i)$, where $\sim$ stands for the {\em indifference relation}, for all unanimity games $u_S$ (seen as pure bargaining games). That is, the {\em certainty equivalent} of playing $u_S$ in position $i$ should be to receive $\frac{1}{s}$ for sure. In our case, $G$-SPB can be seen as a restriction of the original {\it neutrality to strategic risk} property in the sense that it is only imposed for pure bargaining games over the grand coalition $N$ (see Remark \ref{sobre-axiomas} after Theorem \ref{caracterizacion}). It is worthy to remark that $G$-SPB leads to regard each group as one representative, independent of the number of original players it is composed of, when all players are strictly necessary. In a voting game in which the vote of all players is needed in order to pass a bill, all of them are equally powerful regardless of the number of seats they originally have.

\begin{theorem}
The Shapley group value $\phi^g$ satisfies all properties $P1$ to $P13$.
\end{theorem}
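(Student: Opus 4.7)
The strategy is to reduce each axiom $P1$--$P13$ for $\phi^g$ to the corresponding axiom of the Shapley value $\phi$ applied in the merging game $(N_C, v_C)$. Two structural facts make this possible: merging is \emph{associative}, so that for any $i \notin C$ the game $(N_{C \cup i}, v_{C \cup i})$ coincides (after identifying merged players) with the merging game of $(N_C, v_C)$ with respect to the pair $\{\mathbf{c}, i\}$; and merging \emph{commutes with restriction}, i.e.\ $(N \setminus i)_C = N_C \setminus i$ and $(v_{-i})_C = (v_C)_{-i}$ whenever $i \notin C$. Both facts are immediate from (\ref{merging-game}).

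Most of the axioms follow directly from this reduction. $G$-coherence ($P1$) is Shapley efficiency, using the identifications $\phi^g(\{i\}; N, v) = \phi_i(N, v)$ and $\phi^g(N; N, v) = v(N)$. $G$-anonymity ($P4$), $G$-linearity ($P5$) and $G$-relative invariance under strategic equivalence ($P11$) follow because $v \mapsto v_C$ is equivariant, linear and compatible with strategic equivalence, after which the usual Shapley axioms transfer. $G$-SPB ($P7$) uses $(u_N)_C = u_{N_C}$ together with Shapley symmetry on the $(n-c+1)$-player grand coalition. For $G$-dummy and $G$-null ($P2$, $P3$), one checks that a dummy (respectively null) player $i \in N \setminus C$ in $(N, v)$ remains a dummy (respectively null) player in $(N_C, v_C)$, then combines Shapley's null-player-out property with associativity of merging and linearity of $\phi$ to obtain the shift $\phi^g(C \cup i; N, v) = \phi^g(C; N, v) + v(i)$. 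The monotonicity axioms $P8$--$P10$ transfer verbatim since the marginal contributions of $\mathbf{c}$ in $(N_C, v_C)$ are exactly $v(S \cup C) - v(S)$ for $S \subset N \setminus C$, and $v_C$ is monotonic whenever $v$ is. For $G$-coalitional strategic equivalence ($P12$), if $T \subset N \setminus C$ is non-empty then $(v + \lambda u_T)_C = v_C + \lambda u_T$ in $N_C$, and $\mathbf{c} \notin T$ makes $\mathbf{c}$ a null player in $u_T$, so linearity of $\phi$ gives invariance. Finally, $G$-fair ranking ($P13$) follows from the closed form $\phi^g(C; N, v) = \sum_{S \subset N \setminus C} \frac{s!(n-c-s)!}{(n-c+1)!}\bigl(v(S \cup C) - v(S)\bigr)$: when $v$ and $w$ agree off $T$ and $C \subset T$, only the term $S = T \setminus C$ contributes, so $\phi^g(C; N, w) - \phi^g(C; N, v) = \frac{(t-c)!(n-t)!}{(n-c+1)!}\bigl(w(T) - v(T)\bigr)$, which depends on $C$ only through $|C|$ and hence is identical for all equal-sized $C \subset T$, preserving strict inequalities.

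The main obstacle is $G$-coalitional balanced contributions ($P6$). I plan to invoke Hart and Mas-Colell's Shapley potential $P$ and write $\phi^g(C; N, v) = P(N_C, v_C) - P(N \setminus C, v|_{N \setminus C})$. Substituting into both sides of (\ref{CBC-larga}) separates the computation into an ``outer'' part depending only on potentials of subsets of $N \setminus C$, and an ``inner'' part involving potentials of merging games. The outer part collapses via the defining identity $P(A, u) - P(A \setminus k, u_{-k}) = \phi_k(A, u)$ to $\phi_i(N \setminus C, v|_{N \setminus C}) - \phi_i((N \setminus C) \setminus j, (v|_{N \setminus C})_{-j})$, which by the ordinary Shapley balanced-contributions identity in $(N \setminus C, v|_{N \setminus C})$ is symmetric in $i, j$. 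The inner part reduces on each side to an expression of the form $\phi_j(N_{C \cup i}, v_{C \cup i}) - \phi_j(N_C, v_C)$. Using the closed form $P(N, v) = \sum_{\emptyset \neq S \subset N} \frac{(s-1)!(n-s)!}{n!} v(S)$ together with associativity of merging, an elementary coefficient calculation reduces this inner difference to
\[
\sum_{T \subset N \setminus (C \cup \{i,j\})} \frac{(t+1)!\,(n-c-1-t)!}{(n-c+1)!}\, \Delta_j \Delta_i \Delta_{\mathbf{c}}\, v_C(T),
\]
where $\Delta_j \Delta_i \Delta_{\mathbf{c}}$ is the third-order discrete difference operator. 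Since this operator is manifestly symmetric under permutations of $\{\mathbf{c}, i, j\}$, the inner pieces on the two sides of (\ref{CBC-larga}) coincide, closing the argument for $P6$.
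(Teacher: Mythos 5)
Your proof is correct, and for most of the axioms it follows the same scheme as the paper: reduce each group property to the corresponding property of the Shapley value of the proxy player $\mathbf{c}$ in the merging game, using that merging is associative and commutes with restriction. Where you genuinely diverge is in the hardest item, $G$-CBC ($P6$). The paper proves it by writing out the explicit closed-form sums for $\phi^g(C\cup i;N,v)$, $\phi^g(C;N\setminus j,v_{-j})$, $\phi^g(C\cup j;N\setminus i,v_{-i})$ and their counterparts, and then matching the coefficients of $v(S\cup C\cup i\cup j)$, $v(S\cup C\cup i)$, $\dots$, $v(S)$ on the two sides of the equivalent identity \eqref{CBC-corta} -- an elementary but opaque bookkeeping argument whose details are left to the reader. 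You instead route through the Hart--Mas-Colell potential, writing $\phi^g(C;N,v)=P(N_C,v_C)-P(N\setminus C,v\vert_{N\setminus C})$, so that each side of \eqref{CBC-larga} splits into an outer piece $\phi_j(N\setminus C,v\vert_{N\setminus C})-\phi_j((N\setminus C)\setminus i,\cdot)$, handled by ordinary balanced contributions in the restricted game, and an inner piece $\phi_j(N_{C\cup i},v_{C\cup i})-\phi_j(N_C,v_C)$, which you express as a fixed-coefficient combination of third-order differences $\Delta_j\Delta_i\Delta_{\mathbf{c}}v_C(T)$ and kill by the permutation symmetry of that operator (your coefficient $\frac{(t+1)!\,(n-c-1-t)!}{(n-c+1)!}$ checks out, e.g.\ it gives $\frac{1}{6}\Delta^3_{\mathbf{c}ij}(\emptyset)$ when $N_C$ has three players). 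The paper's computation is more self-contained; yours is more structural, makes the role of third-order interactions explicit (nicely foreshadowing the average-complementarity decomposition of Theorem \ref{partner-contribution}), and localizes the only real work in a single symmetric-operator identity. Your treatments of $P12$ and $P13$ are also more explicit than the paper's, which subsumes them under the common reduction scheme without detail; both are fine.
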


\begin{proof}
Let us arbitrarily fix the sets $C$ and $N$, $C\subset N\in {\cal N}$, and let $v\in {\cal G}_N$. Since for every $i\in N$, $\phi^g(i;N,v)$ coincides with the Shapley value $\phi_i(N,v)$ and $\phi^g(N;v,N):=\phi_{\mathbf{n}} (N_N,v_N)$, which equals $v(N)$, property $G$-coherence $P1$ is a consequence of the Shapley value's efficiency.

To check properties $P2$ to $P5$, and $P8$ to $P13$, we use the following common scheme. $i)$ To note that  $\phi^g ({C};N,v)$ is the Shapley value of the proxy player $\mathbf{c}$ in $(N_{C},v_{C})$; $ii)$ to analyze the implications of the considered property in terms of the merging game $(N_{C},v_{C})$ and its (individual) Shapley value; and $iii)$ to rely on the Shapley value properties to conclude the proof. For instance, the $G$-null player property is clear because $i)$ $\phi^g ({C\cup i};N,v)$ is the Shapley value of the proxy player $\mathbf{c'}\equiv C\cup i$ in $(N_{C\cup i},v_{C\cup i})$; $ii)$ since all the marginal contributions of $i$ are zero, the merging game $(N_{C\cup i},v_{C\cup i})$ coincides with the game $(N_C\setminus i, v_C\vert_{N_C\setminus i})$ identifying $\mathbf{c'}\equiv \mathbf{c}$. Note also that $i$ is a null player in $(N_C,v_C)$. Then, $iii)$ $\phi^g ({C\cup i};N,v):=\phi_{\mathbf{c'}} (N_{C\cup i},v_{C\cup i})=\phi_{\mathbf{c}} (N_{C},v_{C})=:\phi^g ({C};N,v)$ because the Shapley value verifies the null player out property (Derks and Haller, 1999).

With respect to property $P6$, $G$-CBC, let $C\subset N\in {\cal N}$ be any two finite sets, and let $v$ any game in ${\cal G}_N$. First let us remark that condition \eqref{CBC-larga} is equivalent to:
\begin{multline}
\xi^g({C\cup i};N,v)-\xi^g({C\cup j};N,v)=
\\
(\xi^g({C\cup i};N\backslash j,v_{-j})-\xi^g(C;N\backslash j,v_{-j}))-(\xi^g({C\cup j};N\backslash i,v_{-i})-\xi^g(C;N\backslash i,v_{-i})).
\label{CBC-corta}
\end{multline}
Also note that, by definition of the merging game and the Shapley value, for any $i,j\in N\setminus C$ the following equalities hold:
\begin{multline*}
\ds \phi^g({C\cup i};N,v)  =  \ds\sum_{\substack{S\subset N\setminus C \\ i,j\notin S}} \Bigl ( \frac{s!(n-c-s-1)!}{(n-c)!}(v(S\cup C\cup i)-v(S))+
\\ \frac{(s+1)!(n-c-s-2)!}{(n-c)!}(v(S\cup C\cup i\cup j)-v(S\cup j))\Bigr ),
\end{multline*}
\begin{multline*}
\phi^g(C;N\backslash j,v_{-j})=   \ds\sum_{\substack{S\subset N\setminus C\\ i,j\notin S}} \Bigl (\frac{s!(n-c-s-1)!}{(n-c)!}(v(S\cup C)-v(S))+
 \\ \frac{(s+1)!(n-c-s-2)!}{(n-c)!}(v(S\cup C\cup i)-v(S\cup i))\Bigr ),
\end{multline*}
$$
\phi^g ({C\cup j};N\backslash i,v_{-i})= \sum_{\substack{S\subset N\setminus C\\ i,j\notin S}} \frac{s!(n-c-s-2)!}{(n-c-1)!}(v(S\cup C\cup j)-v(S)).
$$
 Analogous expressions hold for $\phi^g({C\cup j};N,v)$, $\phi^g (C;N\setminus i,v_{-i})$ and $\phi^g ({C\cup i};N\setminus j,v_{-j})$. Now it is enough to check that for every $S\in N$ with $i,j\notin S$ the coefficients of $v(S\cup C\cup i\cup j)$, $v(S\cup C\cup i)$, $v(S\cup C\cup j)$, $v(S\cup C)$, $v(S\cup i)$, $v(S\cup j)$ and $v(S)$ are the same in both sides of the equation in \eqref{CBC-corta}, and this is easily deduced from the previous expressions. We leave the details to the reader.

It remains property $P7$, $G$-SPB. Consider the unanimity game with respect to the grand coalition $(N,u_N)$, and a non-empty group $C\subset N\in {\cal N}$. It is straightforward to see that the merging game $(N_C,(u_N)_C)$ is the unanimity game $(N_C,u_{N_C})$ with respect to the grand coalition $N_C$, so $\phi^g (C;N,u_N):=\phi_{\mathbf{c}}(N_C,(u_N)_C)=\frac{1}{n-c+1}$, as desired.
\end{proof}

Next, we give our characterization of the Shapley group value. It must be remarked that it is not a trivial extension of a characterization of the Shapley value, since we do not impose any condition about the value of the individual agents out of the group $C$ we are evaluating. In particular, we have been forced to use in the same characterization group linearity and group coalitional balanced contributions properties. Thus, we should carefully check that all the considered properties are necessary to guarantee the uniqueness of the group value $\phi^g$ (see the Appendix).

\begin{theorem}
The unique group value over the set of all games ${\cal G}$ verifying $G$-null player, $G$-linearity, $G$-CBC, and $G$-SPB ($P3$, $P5$, $P6$ and $P7$, respectively) is the Shapley group value $\phi^g$.
\label{caracterizacion}
\end{theorem}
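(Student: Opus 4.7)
The plan is to establish uniqueness; existence is already covered by the previous theorem. Let $\xi^g$ be any group value satisfying $P3$, $P5$, $P6$, $P7$; I aim to show $\xi^g = \phi^g$. Since the unanimity games $\{u_T : \emptyset \neq T \subset N\}$ form a basis of ${\cal G}_N$, $G$-linearity ($P5$) reduces the task to determining $\xi^g(C; N, u_T)$ for every $N \in {\cal N}$, every $T \subset N$, and every $C \subset N$. I would proceed by induction on $n = |N|$. The base case $n = 1$ is handled by $G$-SPB ($P7$), which forces $\xi^g(N; N, u_N) = 1$; together with the convention $\xi^g(\emptyset; N, v) = 0$, this pins down all values.

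For the inductive step with $|N| = n \geq 2$, split on whether $T = N$ or $T \subsetneq N$. The case $T = N$ is immediate from $G$-SPB. For $T \subsetneq N$, fix any null player $i \in N \setminus T$ of $u_T$. By iterated application of $G$-null ($P3$) to the null players lying inside $C$, we obtain $\xi^g(C; N, u_T) = \xi^g(C \cap T; N, u_T)$, so we may assume $C \subset T$ (in particular $i \notin C$). The crux is then to apply $G$-CBC ($P6$) to the pair $(i,j)$ with this null $i$ and an arbitrary $j \in N \setminus (C \cup \{i\})$: because $i$ is null in both $u_T$ and $u_T|_{-j}$, $G$-null makes the two terms on the left-hand side of the $G$-CBC identity vanish, yielding
\[
\xi^g(C \cup j; N, u_T) - \xi^g(C; N, u_T) = \xi^g(C \cup j; N \setminus i, u_T|_{-i}) - \xi^g(C; N \setminus i, u_T|_{-i}),
\]
whose right-hand side is determined by the inductive hypothesis applied on the $(n-1)$-player set $N \setminus i$.

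Starting from $\xi^g(\emptyset; N, u_T) = 0$ and adding one element of $T$ at a time (permissible because at each stage $j \in T$ can be chosen distinct from $i \in N \setminus T$), the recursion above pins down $\xi^g(C; N, u_T)$ for every $C \subset T$; the earlier $G$-null reduction then extends this to arbitrary $C \subset N$. The main obstacle is making the recursion go through: the cancellation of the LHS of $G$-CBC requires a null player $i$ sitting outside $C$, which is precisely what the preliminary $G$-null reduction to $C \subset T$ guarantees, and one must also verify that restricting the unanimity game yields $u_T$ on $N \setminus i$ (which holds because $i \notin T$). The necessity of each of the four axioms for uniqueness — i.e., the independence of the system — is handled separately in the Appendix, as the theorem statement indicates.
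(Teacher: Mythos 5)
Your proof is correct and follows essentially the same strategy as the paper's: reduce to unanimity games via $G$-linearity, handle $u_N$ with $G$-SPB, strip null players from $C$ with $G$-null, and use $G$-CBC paired with a null player $i\notin T$ to make the left-hand side vanish and reduce to the $(n-1)$-player set $N\setminus i$, with a double induction on $\vert N\vert$ and $\vert C\vert$. Your organization (a single recursion building $C$ up from $\emptyset$ one element of $T$ at a time, starting from the base case $n=1$) is a slightly cleaner packaging of the paper's argument, but not a genuinely different route.
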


\begin{proof}
We have proved that the properties hold for the Shapley group value, so  we are left with the question of uniqueness.

Since $\{ (N,u_S)\}_{\substack{S\subset N \\ S\neq\emptyset}}$ forms a basis of ${\cal G}_N$ for all $N\in {\cal N}$, by $G$-linearity it is sufficient to consider the games $(N,u_S)$, $\emptyset\neq S\subset N\in {\cal N}$. So let us see that $\xi^g  (C;N,u_S)=\phi^g (C;N,u_S):=\phi_{\mathbf{c}} (N_C,(u_{S})_C)$ for all non-empty subsets $C,S\subset N\in {\cal N}$. When $C=\emptyset$, this equality trivially holds by definition of a group value.

The proof will consist in a double induction over the cardinality of the player set $N$ (first induction) and the cardinality of the unanimous coalition $S\subset N$ (second induction).

First, we will prove that $\xi^g  (C;N,u_S)=\phi^g (C;N,u_S)$ for all non-empty subsets $C,S\subset N$ whenever the cardinality of $N\in {\cal N}$  is $n\leq 2$.  For the unanimity game $(\{i\},u_i)$ with just one player $i\equiv N$, $G$-SPB property $P7$ implies that $\xi^g (i;N,u_i)=1=\phi^g(i;N,u_i)$.
  Now, let $(N,u_S)$ be a two-person unanimity game with $N=\{i,j\}$. For the unanimity game $u_N$, $P7$ implies that
$\xi^g ( \{i,j\};N,u_N)=\phi^g (\{i,j\};N,u_N)$. For the unanimity game $(N,u_i)$, $G$-null player $P3$ implies:
\begin{align}
\xi^g (j;N,u_i)=\xi^g (\emptyset;N,u_i):=0 \;\text{ and } \; \xi^g (i;N,u_i)=\xi^g (\{i,j\};N,u_i) ,
\label{ec1}
\end{align}
Now, $G$-CBC property $P6$ implies
\begin{align*}
\xi^g (i;N,u_i)-\xi^g (j;N,u_i) & =
(\xi^g (i;\{i \},u_i)-\xi^g({\emptyset};\{i \},u_i)-(\xi^g(j;\{j \}, u^0)-\xi^g({\emptyset};\{j \}, u^0)),
\\
& = 1-0-0+0,
\end{align*}
where $\xi^g(i;N,u_i)=0$ for the trivial game $(\{i\},u^0)$ with $u^0(i)=0$, follows from $P3$. Therefore, taking into account \eqref{ec1}, $\xi^g (\cdot;\{ i,j\}, u_i)\equiv \phi^g (\cdot;\{ i,j\}, u_i)$ holds. The same reasoning applies to $(N,u_j)$.

Let us fix a player set $N\in {\cal N}$ with $\vert N\vert =r$, and consider the unanimity game $(N,u_S)$ for a fix set $\emptyset\neq S\subset N$. We will prove that $\xi^g (C;N,u_S)=\phi^g (C;N,u_S)$ for all $C\subset N$. Two cases are possible:
\begin{enumerate}[$(i)$]
\item
If $S=N$, then $G$-SPB implies $\xi^g (C;N,u_N)=\phi^g (C;N,u_N)$ for any non-empty group $C$ in $N$.
\item
Otherwise, if $\emptyset\neq S\subsetneq N$, we proceed by induction on the cardinality of $C$. Let us first prove the individual case $C=\{i\}$.

There is at least a player $j\in N\backslash S$ which by definition of unanimity game must be null. Let $i$ be a player in $S$. Again by $G$-CBC, taking $C=\emptyset$, we obtain
$\xi^g (i;N,u_S)= \xi^g (i;N\backslash j,{u_S} \vert_{N\setminus j})$,
%
%\begin{multline*}
%\xi^g (i;N,u_S)-\xi^g (j;N,u_S) & =(\xi^g (i;N\backslash j,{u_S} \vert_{N\setminus j}-\xi^g (\emptyset;N\backslash j,{u_S} \vert_{N\setminus j}) %-(\xi^g (j;N\backslash i,{u_S} \vert_{N\setminus i}-\xi^g (\emptyset;N\backslash i,{u_S} \vert_{N\setminus i})=
%\\
%& = \xi^g (i;N\backslash j,{u_S} \vert_{N\setminus j}),
%\end{multline*}
%
since $G$-null player implies $\xi^g (j;N,u_S)=\xi^g ({\emptyset};N,u_S)=0$, and taking into account that $u_S\vert_{N\setminus i}\equiv u^0\in {\cal G}_{N\setminus i}$.

Now, we may assume by the first induction that for every unanimity game $(N',u_S\vert_{N'})$ with $N'\subsetneq N$ we have $\xi^g (C;N',u_S\vert_{N'})=\phi^g (C;N',u_S\vert_{N'})$ for any group $C$ in $N'$. Thus, $\xi^g (i;N\backslash j,{u_S} \vert_{N\setminus j})=
\phi^g (i;N\backslash j,{u_S} \vert_{N\setminus j})$, which in turn is equal to $\frac{1}{s}$ by definition, and then  $\xi^g (i;N,u_S) =\frac{1}{s}=\phi_i(N,u_S)=\phi^g(i;N,u_S)$. Note that every $i\notin S$ is a null player in $u_S$ and therefore $G$-null player implies
$\xi^g (i;N,u_S)=0=\phi^g (i;N,u_S)$. So we are done with the individual case $C=\{i\}$.

Now, in order to prove $\xi^g (C;N,u_S)=\phi^g (C;N,u_S)$ for all groups $C$ with $c>1$, we proceed by induction on the cardinality of $C$ (second induction). So we take now $1<r'\leq r$, and  we may assume that $\xi^g (C;N,u_S)=\phi^g (C;N,u_S)$ holds for any $C\subset N$ with $|C|<r'$. We will check that $\xi^g (D;N,u_S)=\phi^g (D;N,u_S)$ for all $D\subset N$ with $|D|=r'$.

Let $D$ be a fixed subset of $N$ of cardinality $r'$. Since  $S\varsubsetneq N$, again there is a null player $j$ in $u_S$. So, if $j\in D$, then $D\setminus j$ is a coalition of cardinal $r'-1$ and, thus, $G$-null player and the second induction hypothesis imply
 $$
 \xi^g (D;N,u_S)=\xi^g ({D\backslash j};N,u_S)=\phi^g ({D\backslash j};N,u_S)=\phi^g(D;N,u_S).
 $$
 Otherwise, if $D$ does not contain any null player, then let $i$ be a player in $D\subset S$. By the second induction hypothesis and $G$-null player property it holds
 $$
 \xi^g ({(D\backslash i)\cup j};N,u_S)=\xi^g ({(D\backslash i)};N,u_S)=\phi^g ({(D\backslash i)};N,u_S)=\phi^g ({(D\backslash i)\cup  j};N,u_S).
 $$
Note also that $\xi^g (C;N\backslash i,{u_S}\vert_{N\setminus i}))=0=\phi^g (C;N\backslash i,{u_S}\vert_{N\setminus i}))$ for all $C\subset N$, since $u_S\vert_{N\setminus i}\equiv u^0\in {\cal G}_{N\setminus i}$. Hence, by $G$-CBC, taking $C=D\backslash i$, and the first induction,
\begin{multline*}
\xi^g (D;N,u_S)- \phi^g ({(D\backslash i)\cup j};N,u_S) = (\phi^g (D;N\backslash j,{u_S}\vert_{N\setminus j})-\phi^g ({D\backslash i};N\backslash j,{u_S}\vert_{N\setminus j})) -
\\
(\phi^g ({(D\backslash i)\cup j};N\backslash i,{u_S}\vert_{N\setminus i})-\phi^g ({D\backslash i};N\backslash i,{u_S}\vert_{N\setminus i})) =
\phi^g (D;N,u_S)- \phi^g ({(D\backslash i)\cup j};N,u_S).
\end{multline*}
\end{enumerate}
So we have proved the uniqueness for the unanimity games $(N,u_S)$ for all $S\subset N\in {\cal N}$ and we are done.
\end{proof}

\begin{remark}
As we commented before, $G$-SPB leads players to act as one representative in pure bargaining games when all players are strictly necessary, and this combined with $G$-null player and $G$-CBC, lead to the same strategic behavior in all pure bargaining games $(N,u_S)$, $S\subset N$.
In fact, assume we replace the certainty equivalent in $G$-SPB as follows:
\begin{equation}
\xi^g (C;N,u_N)=\frac{c}{n}, \text{ for every non-empty group $C\subset N\in {\cal N}$} .
\label{prop-shapley-suma}
\end{equation}
Then,  condition \eqref{prop-shapley-suma}, $G$-null player, $G$-linearity and $G$-CBC characterize the additive group value ${\cal A}\phi^g$ defined as the sum of the individual Shapley values of the involved players, where ${\cal A}\phi^g(C;N,v):=\sum_{i\in C} \phi_i (N,v)$, for every $C\subset N$ and for all $N\in {\cal N}$.
\end{remark}

We conclude from the previous remark that in case we want to use a group valuation with the Shapley value standards that accounts for the synergy among group members, then we must use the Shapley group value we define.

\begin{remark}
Note also that if we strengthen $G$-SPB imposing the certainty equivalent of playing every pure bargaining game $(N,u_S)$, for all $S\subset N\in {\cal N}$, as follows $(C,u_S) \sim (\mathbf{c}, \frac{1}{\vert S\setminus C\vert +1}u_{\mathbf{c}})$, for every group $C$ such that $S\cap C\neq \emptyset$, then the Shapley group value can be characterized by neutrality to both kind of risks: ordinary and strategic.
\label{sobre-axiomas}
\end{remark}

\section{Profitability of a group}
\label{profitability}

In Section \ref{def-axiomatizacion} we have proposed a method to measure the a priori value of a group and we have characterized it axiomatically. In this section we rely on this valuation to analyze the {\em profitability} of a group.

Next proposition shows that supperadditivity implies that the expected value of group $C$ is at least the value that their members can assure for themselves. Moreover, if the game is monotonic larger groups are more valuable.

We also analyze when the integration of group $C$ is {\em mergeable} in the sense of Derks and Tijs (2000). We end up this section studying the marginal effect that the incorporation of a new member has over an already integrated group. Theorem \ref{partner-contribution} relates this marginal effect with a measure of average complementarity between the entrant player and the incumbent group.

\begin{proposition}
Let $N\in {\cal N}$ be any finite set of players, and $v$ be any game in ${\cal G}_N$. Then, the Shapley group value $\phi^g$ verifies the following properties:
\begin{enumerate}[$(i)$]
\item
{\em Group Rationality}: $\phi^g (C;N,v) \geq v(C)$ for every $C\subset N \in {\cal N}$ if the game $v\in {\cal G}_N$ is superadditive, and
\item
{\em Monotonicity}: $\phi^g  (C;N,v) \leq  \phi^g (D;N,v)$ for every pair of coalitions $C\subset D\subset N\in {\cal N}$ if the game $v\in {\cal G}_N$ is monotonic.
\end{enumerate}
\end{proposition}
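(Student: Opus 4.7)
My plan is to prove the two properties separately, exploiting the definition $\phi^g(C;N,v)=\phi_{\mathbf{c}}(N_C,v_C)$ and the well-known properties of the Shapley value.

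For part (i), the plan is to first show that superadditivity transfers from $v$ to the merging game $v_C$, and then invoke individual rationality of the Shapley value on superadditive games. Concretely, for disjoint $S,T\subset N_C$, if $\mathbf{c}\notin S\cup T$ then $v_C(S\cup T)=v(S\cup T)\geq v(S)+v(T)=v_C(S)+v_C(T)$ by superadditivity of $v$; and if (WLOG) $\mathbf{c}\in S$, then $v_C(S\cup T)=v((S\setminus\mathbf{c})\cup T\cup C)\geq v((S\setminus\mathbf{c})\cup C)+v(T)=v_C(S)+v_C(T)$ again by superadditivity of $v$. Once $v_C$ is known to be superadditive, every marginal contribution $v_C(T\cup\mathbf{c})-v_C(T)$ is at least $v_C(\mathbf{c})=v(C)$, so their weighted average $\phi_{\mathbf{c}}(N_C,v_C)$ is also at least $v(C)$.

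For part (ii), by transitivity it suffices to handle the case $D=C\cup\{j\}$ with $j\in N\setminus C$ and iterate. I would write out the two values using formula \eqref{shapley-marginal} applied to the merging games:
\begin{align*}
\phi^g(C;N,v) & = \sum_{S\subset N\setminus C} \frac{s!(n-c-s)!}{(n-c+1)!}\bigl[v(S\cup C)-v(S)\bigr], \\
\phi^g(C\cup j;N,v) & = \sum_{S\subset N\setminus(C\cup j)} \frac{s!(n-c-s-1)!}{(n-c)!}\bigl[v(S\cup C\cup j)-v(S)\bigr].
\end{align*}
Then I would split the first sum according to whether $j\in S$ (reindexing the $j\in S$ part by $T=S\setminus j$), subtract, and collect coefficients of each of the seven terms $v(S\cup C\cup j)$, $v(S\cup C)$, $v(S\cup j)$ and $v(S)$ that arise, for $S\subset N\setminus(C\cup j)$. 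A short computation with the binomial weights yields the clean identity
\begin{multline*}
\phi^g(C\cup j;N,v)-\phi^g(C;N,v)=\sum_{S\subset N\setminus(C\cup j)} \frac{s!(n-c-s)!}{(n-c+1)!}\bigl[v(S\cup C\cup j)-v(S\cup C)\bigr] \\
+\sum_{S\subset N\setminus(C\cup j)} \frac{(s+1)!(n-c-s-1)!}{(n-c+1)!}\bigl[v(S\cup j)-v(S)\bigr],
\end{multline*}
and both bracketed differences are non-negative by monotonicity of $v$.

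The main obstacle is the bookkeeping in part (ii): the coefficients $\frac{s!(n-c-s-1)!}{(n-c)!}$ and $\frac{s!(n-c-s)!}{(n-c+1)!}$ live in different merging games, so combining them requires the identity $\frac{s!(m-s-1)!}{m!}\cdot\frac{m-s}{m+1}=\frac{s!(m-s)!}{(m+1)!}$ (with $m=n-c$) to re-express everything over the common denominator $(n-c+1)!$. Once this reorganization is carried out, both surviving sums are manifestly non-negative, and the result follows.
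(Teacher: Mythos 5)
Your proposal is correct and takes essentially the same route as the paper: part (i) by noting superadditivity passes to the merging game and invoking individual rationality of the Shapley value, and part (ii) by reducing to $D=C\cup\{j\}$ and deriving precisely the paper's two-sum decomposition of $\phi^g(C\cup j;N,v)-\phi^g(C;N,v)$, whose bracketed differences are non-negative under monotonicity. No gaps.
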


\begin{proof}
Group rationality follows from the individual rationality of the Shapley value. Note that every merging game $(N_C,v_C)$, $C\subset N$, is superadditive if it is $(N,v)$, for all $C\subset N\in {\cal N}$ and $v\in {\cal G}_N$.

Monotonicity follows from being
\begin{multline}
\phi^g ({C\cup i};N,v) - \phi^g (C;N,v)=\sum_{\substack{S\subset N\setminus C\\ i\notin S}} \frac{s! (n-c-s)!}{(n-c+1)!} \Bigl ( v(S\cup i\cup C)-v(S\cup C)\Bigr ) +
\\
 \frac{(s+1)! (n-c-s-1)!}{(n-c+1)!} \Bigl ( v(S\cup i)-v(S) \Bigr ) \geq 0,
\label{marginal-contribution-iC}
\end{multline}
for all coalitions $C\subset N\in {\cal N}$, and all players $i\notin C$, whenever the  game $v$ is monotonic.
%\proofblok
 \end{proof}

Now we will examine the profitability of the integration of group $C$ measured as the difference between the Shapley value of group $C$, which represents its a priori valuation when they act as one representative, and the sum of the individual Shapley values of the involved players (the additive Shapley group value of $C$), i.e.
$$
\phi^g(C;N,v)-\sum_{i\in C} \phi_i(N,v) .
$$
Profitability is analyzed by Derks and Tijs (2000), and also by Segal (2003). Next, we recall the general results they obtained.

\begin{proposition}[Derks and Tijs, 2000]
Let $N\in {\cal N}$ be any finite set of players, and $v$ be any game in ${\cal G}_N$. Then, coalition $C\subset N \in {\cal N}$ is {\em profitable}\footnote{Actually, Derks and Tijs (2000) refer to profitability as {\em mergeability}.}
 whenever all coalitions with positive Harsanyi dividend are either contained in $C$ or have at most one player in common with $C$.
\label{DT-mergeability-theorem}
\end{proposition}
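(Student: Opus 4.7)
The plan is to reduce everything to unanimity games via the Harsanyi decomposition and then apply $G$-linearity ($P5$) termwise. Writing $v=\sum_{\emptyset\neq S\subset N}\Delta_v(S)\,u_S$, where $\Delta_v(S)$ denotes the Harsanyi dividend of $S$, both the Shapley value and the Shapley group value are linear in $v$, so with the shorthand $\delta(C,S):=\phi^g(C;N,u_S)-\sum_{i\in C}\phi_i(N,u_S)$ one has
\begin{equation*}
\phi^g(C;N,v)-\sum_{i\in C}\phi_i(N,v)=\sum_{\emptyset\neq S\subset N}\Delta_v(S)\,\delta(C,S).
\end{equation*}
It therefore suffices to understand the sign of $\delta(C,S)$ for each $S$.

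The computational heart of the argument is the identification of the merging game $(N_C,(u_S)_C)$. Setting $R:=S\setminus C$ and inspecting~\eqref{merging-game}, a direct check shows that $(u_S)_C$ equals $u_{\{\mathbf{c}\}}$ when $S\subset C$, equals $u_S$ viewed as a game on $N_C$ when $S\cap C=\emptyset$, and equals $u_{R\cup\{\mathbf{c}\}}$ in the remaining case, where both $S\cap C\neq\emptyset$ and $R\neq\emptyset$. Applying Shapley's formula on unanimity games then gives $\phi^g(C;N,u_S)$ equal to $1$, $0$ and $1/(|R|+1)$ respectively. Since $\sum_{i\in C}\phi_i(N,u_S)=|S\cap C|/|S|$ whenever $S\cap C\neq\emptyset$ and vanishes otherwise, a case analysis shows that $\delta(C,S)=0$ whenever $S\subset C$ or $|S\cap C|\leq 1$, while for $a:=|S\cap C|\geq 2$ and $b:=|S\setminus C|\geq 1$
\begin{equation*}
\delta(C,S)=\frac{1}{b+1}-\frac{a}{a+b}=\frac{b(1-a)}{(b+1)(a+b)}<0.
\end{equation*}

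To conclude, the hypothesis forces every $S$ with $\Delta_v(S)>0$ to satisfy either $S\subset C$ or $|S\cap C|\leq 1$, so $\delta(C,S)=0$ on every such $S$. Conversely, for any $S$ at which $\delta(C,S)<0$ the hypothesis forces $\Delta_v(S)\leq 0$, whence $\Delta_v(S)\,\delta(C,S)\geq 0$. Each term in the displayed sum is therefore non-negative, yielding $\phi^g(C;N,v)\geq\sum_{i\in C}\phi_i(N,v)$, i.e., profitability. The only step that is not pure bookkeeping is the three-case description of $(u_S)_C$; once that is settled, unanimity-game formulas finish the job.
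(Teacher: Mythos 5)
Your argument is correct. The paper states this result as a citation from Derks and Tijs (2000) and gives no proof of its own, so there is nothing internal to compare against; your route --- decomposing $v$ into unanimity games via Harsanyi dividends, using $G$-linearity of $\phi^g$ and linearity of the Shapley value, and then checking that the merging game $(N_C,(u_S)_C)$ is $u_{\{\mathbf{c}\}}$, $u_S$, or $u_{(S\setminus C)\cup\{\mathbf{c}\}}$ according to the three cases --- is the natural one, and the sign computation $\delta(C,S)=\frac{b(1-a)}{(b+1)(a+b)}\leq 0$ with equality exactly when $S\subset C$ or $\lvert S\cap C\rvert\leq 1$ correctly yields that every term $\Delta_v(S)\,\delta(C,S)$ in the sum is non-negative under the stated hypothesis.
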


Derks and Tijs also propose some interesting types of games for which every coalition is profitable, or profitability  can be guaranteed for certain kinds of coalitions.

The results of Segal (2003) rely on the  {\em second-order difference operator} for a pair of players $i,j\in N$, and the {\em third-order difference operator} for three players $i,j,k\in N$.

The {\em second-order difference operator} for a pair of players $i,j\in N$ is defined as a composition of marginal contribution operators (i.e., first-order difference operators) as follows:
$$
\Delta^2_{ij} (S;N,v)=v(S\cup\{i,j\})-v(S\cup  j ) - v(S\cup  i)+v(S),\quad\forall\; S\subset N\setminus\{ i,j\} .
$$
Here  $\Delta^2_{ij} (S;N,v)$ expresses player $i$'s effect over the marginal contribution of player $j$ (or vice versa). Note that
 $v(S\cup\{i,j\})-v(S)=\Delta^2_{ij} (S;N,v)+\Delta_{i} (S;N,v)+\Delta_{j} (S;N,v)$, and thus $\Delta^2_{ij} (S;N,v) > 0$ implies that the marginal contribution of players $i,j$ as a group exceeds the sum of the individual marginal contributions of each player.

In fact, following Bulow, Geanakoplos, and Klemperer (1985),  players $i$ and $j$ are said to be {\em strategic complements} whenever  $\Delta^2_{ij} (S;N,v)\geq 0$, for all $S\subset N\setminus \{ i,j\}$.  They are said to be {\em strategic substitutes} whenever $\Delta^2_{ij} (S;N,v)\leq 0$, for all $S\subset N\setminus \{ i,j\}$. Therefore, $\Delta^2_{ij} (S;N,v)$  can be interpreted as a measure of players $i$ and $j$ {\em interaction} with respect to the players in $S$.

Analogously, the {\em third-order difference operator} for players $i,j,k\in N$ is defined as $\Delta^3_{ijk} (\cdot;N,v)=\Delta_i(\Delta^2_{jk} (\cdot;N,v))$, for all $S\subset N\setminus\{ i,j,k\}$.
 Here  $\Delta^3_{ijk} (S;N,v)$ expresses player $k$'s effect over the complementarity between players $i$ and $j$ with respect to the players in $S$. Again, the operator does not depend on the order of taking differences.

Segal (2003) obtains the following result about profitability of groups of two players\footnote{Note that the kind of group integration we work with is equivalent to the {\em collusive} contracts considered by Segal (2003).}, showing that the merging of two players $i,j\in N$ is profitable (unprofitable) whenever the presence of the outside players reduces (increases) the complementarity between the colluding players.
 This author also takes into account the profitability of the incorporation of a new member $j\in N\setminus C$ to an already integrated group $C$ (point $(ii)$ in next Proposition \ref{Segal-collusion-prop}). In this case, profitability is measured with respect to the situation in which the players of group $C$ are colluding. That is, profitability means
\begin{equation}
\phi^g(C\cup j;N,v) \geq \phi^g(C;N,v) + \phi_j(N_C,v_C) .
\label{marginal-contribution-quotient}
\end{equation}

\begin{proposition}[Segal, 2003] Let $N\in {\cal N}$ be any finite set of players, and $v$ be any game in ${\cal G}_N$. Then:
\begin{enumerate}[$(i)$]
\item
 A coalition $C=\{i,j\}\subset N$ of two players is {\em profitable} ({\em unprofitable}) if $\Delta^3_{ijk} (S;N,v) \leq (\geq ) \; 0$, for every coalition $S\subset N\setminus \{ i,j,k\}$, and for all $k\in N\setminus C$. If the reverse inequalities hold, then group $C$ is unprofitable.
\item
The union of the integrated group $C\subset N$ and player $j\notin C$ is profitable (unprofitable) if $\Delta^3_{ijk} (S;N,v) \leq (\geq ) \;0$, for every coalition $S\subset N\setminus \{ i,j,k\}$, and for all $i\in C$, $k\in N\setminus (C\cup i)$.
\end{enumerate}
 \label{Segal-collusion-prop}
 \end{proposition}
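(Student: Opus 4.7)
My plan is to reduce part (ii) to part (i) using the associativity of the merging operation, and then to prove part (i) by expanding $v$ in the basis of unanimity games. For the reduction, I would observe that merging $C\cup j$ in $(N,v)$ is identical to first forming $(N_C,v_C)$ and then merging $\{\mathbf{c},j\}$ inside it: the two quotient player sets coincide (both equal $(N\setminus(C\cup j))\cup\{\text{proxy}\}$) and the characteristic functions agree on every subset, so \eqref{marginal-contribution-quotient} becomes exactly the two-player profitability inequality of (i) applied to $\{\mathbf{c},j\}\subset N_C$ in $(N_C,v_C)$. To transport the hypothesis, a direct computation together with a telescoping argument shows
\begin{align*}
\Delta^3_{\mathbf{c}jk}(S;N_C,v_C)&=\Delta^2_{jk}(S\cup C;N,v)-\Delta^2_{jk}(S;N,v)\\
&=\sum_{\ell=1}^{c}\Delta^3_{i_\ell jk}\bigl(S\cup\{i_1,\ldots,i_{\ell-1}\};N,v\bigr),
\end{align*}
for any enumeration $C=\{i_1,\ldots,i_c\}$, so non-positivity of each summand (the hypothesis of (ii)) forces $\Delta^3_{\mathbf{c}jk}(S;N_C,v_C)\leq 0$.

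For part (i), I would work in the unanimity basis $v=\sum_T\lambda_T(v)\,u_T$. A short inclusion-exclusion on $u_T$ gives
$$
\Delta^3_{ijk}(S;N,u_T)=\begin{cases}1,&T=S\cup\{i,j,k\},\\ 0,&\text{otherwise,}\end{cases}
$$
so that $\Delta^3_{ijk}(S;N,v)=\lambda_{S\cup\{i,j,k\}}(v)$. The hypothesis of (i) thus translates into the clean condition $\lambda_T(v)\leq 0$ for every $T\supseteq\{i,j\}$ with $|T|\geq 3$. Next, I would check by a brief case analysis that, with $C=\{i,j\}$, the merging game $(N_C,(u_T)_C)$ is itself a unanimity game on $N_C$ whose carrier has cardinality $t-1$ if $\{i,j\}\subseteq T$, cardinality $t$ if $|T\cap\{i,j\}|=1$, and with $\mathbf{c}$ null if $T\cap\{i,j\}=\emptyset$. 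Combined with $\phi_i(N,u_T)=\mathbf{1}_{i\in T}/t$, this yields
$$
\phi^g(\{i,j\};N,u_T)-\phi_i(N,u_T)-\phi_j(N,u_T)=\begin{cases}\dfrac{2-t}{t(t-1)},&\{i,j\}\subseteq T\text{ and } t\geq 3,\\ 0,&\text{otherwise,}\end{cases}
$$
which is always non-positive.

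Putting the two ingredients together by linearity in $v$,
$$
\phi^g(\{i,j\};N,v)-\phi_i(N,v)-\phi_j(N,v)=\sum_{\substack{T\supseteq\{i,j\}\\ t\geq 3}}\lambda_T(v)\,\frac{2-t}{t(t-1)}\ \geq\ 0,
$$
because each summand is a product of two non-positive numbers; the unprofitability half follows by reversing all inequalities throughout. The main obstacle I expect is the combinatorial bookkeeping: identifying the carrier of $(u_T)_C$ in the three configurations of $T\cap\{i,j\}$ and verifying that $\Delta^3_{ijk}(S;N,u_T)$ vanishes off the single index $T=S\cup\{i,j,k\}$. Once these identifications are done, the rest of the argument is essentially a free ride on the linearity of both the Shapley value and the Shapley group value, plus the telescoping identity used to transfer the hypothesis from $(N,v)$ to the quotient game $(N_C,v_C)$.
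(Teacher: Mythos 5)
The paper itself gives no proof of this proposition --- it is quoted from Segal (2003) --- so your argument has to stand on its own. Your reduction of (ii) to (i) is sound: merging is associative, the identity $\Delta^3_{\mathbf{c}jk}(S;N_C,v_C)=\Delta^2_{jk}(S\cup C;N,v)-\Delta^2_{jk}(S;N,v)$ and the telescoping over an enumeration of $C$ are both correct, and profitability in the sense of \eqref{marginal-contribution-quotient} is exactly two-player profitability of $\{\mathbf{c},j\}$ in $(N_C,v_C)$.

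The proof of (i), however, has a genuine gap. The identity $\Delta^3_{ijk}(S;N,u_T)=\mathbf{1}_{\{T=S\cup\{i,j,k\}\}}$ is false: the alternating sum $\sum_{B\subseteq\{i,j,k\}}(-1)^{3-|B|}u_T(S\cup B)$ equals $1$ precisely when $T\setminus S=\{i,j,k\}$, i.e.\ whenever $\{i,j,k\}\subseteq T\subseteq S\cup\{i,j,k\}$ (for instance $T=\{i,j,k\}$ and $S=\{l\}$ give the value $1$). Hence $\Delta^3_{ijk}(S;N,v)=\sum_{\{i,j,k\}\subseteq T\subseteq S\cup\{i,j,k\}}\lambda_T(v)$, so the hypothesis of (i) constrains only these partial sums, not the signs of individual dividends. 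The implication ``$\Delta^3_{ijk}(S;N,v)\leq 0$ for all $k,S$ implies $\lambda_T(v)\leq 0$ for all $T\supseteq\{i,j\}$ with $t\geq 3$'' is therefore false: on $N=\{i,j,k,l\}$ take $\lambda_{\{i,j,k\}}=\lambda_{\{i,j,l\}}=-1$ and $\lambda_{\{i,j,k,l\}}=1$; every relevant $\Delta^3$ equals $-1$ or $0$, yet $\lambda_{\{i,j,k,l\}}>0$ and the corresponding summand $\lambda_{\{i,j,k,l\}}\tfrac{2-4}{4\cdot 3}$ in your final display is negative, so the ``product of two non-positive numbers'' step collapses. (Your computation of $\phi^g(\{i,j\};N,u_T)-\phi_i(N,u_T)-\phi_j(N,u_T)=\tfrac{2-t}{t(t-1)}$ for $\{i,j\}\subseteq T$, and $0$ otherwise, is correct; the gap is solely in transporting the hypothesis.) A repair along your lines: set $g(S)=\Delta^2_{ij}(S;N,v)$, so the hypothesis says $g$ is non-increasing under inclusion; a direct computation gives $\phi^g(\{i,j\};N,v)-\phi_i(N,v)-\phi_j(N,v)=\sum_{S\subseteq N\setminus\{i,j\}}\frac{s!\,(n-s-2)!}{n!}\,(n-2-2s)\,g(S)$, a signed average whose weights sum to zero and are positive exactly on the small coalitions; nonnegativity then follows by pairing each small $S$ with supersets of complementary cardinality, which is essentially Segal's random-order argument.
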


Condition $(i)$ above states that profitability of the merging of players $i$ and $j$ is not directly related with their own complementarity. In fact, if we analyze the {\em games with indispensable players} application of Segal (2003) that models for instance the case of a firm that is indispensable to its workers, it holds that the union of two substitutable workers is profitable if their bargaining opponents (the remaining workers) enhances their substitutability. With respect to the union of the firm $p$ and one of the workers $d$, which are always strategic complements, this union is profitable (unprofitable) if worker $d$ is a strategic substitute (complement) of the rest of workers.

According to Segal's results it is clear that complementarity and substitutability are not directly related to profitability. However,
  Theorem \ref{partner-contribution} below shows how the Shapley group value incorporates those kind of relations among the players when evaluating the value of a group. The marginal contribution of a new entrant $j$ to the incumbent group $C$ is the sum of the a priori value of player $j$ which does not depend on $C$, and the {\em average complementarity} between $j$ and $C$. Formally:

  \begin{definition}
Let $N\in {\cal N}$ be any finite set of players, and $v$ be any game in ${\cal G}_N$, the {\em average complementarity} of players $i,j\in N$ is defined as the following average of second-order differences:
\begin{equation}
\psi_{ij} (N,v):=\sum_{S\subset N\setminus \{i,j\}} \frac{s! (n-s-1)!}{n!}  \Delta^2_{ij} (S;N,v), \quad\text{for all $i\neq j\in N$.}
\end{equation}
\label{average-com}
\end{definition}

The average $\psi_{ij} (N,v)$ is taken over all the possible orders of $N=\{1,\dots,n\}$, when all orders are considered equally probable. The second-order difference $\Delta^2_{ij}(S;N,v)$ is considered in all orders in which coalition $S$ contains all players arriving between $i$ and $j$, and $i$ comes before $j$. It can be interpreted as an {\em interaction index} in the sense of Grabisch and Roubens (1999).

\begin{theorem}
Let $N\in {\cal N}$ be any finite set of players, and $v$ be any game in ${\cal G}_N$. Let $C\subset N$ be any group in $N$, and let $i\notin C$. Then, the {\em marginal contribution} of player $i\in N\setminus C$ to the Shapley group value of $C$ equals:
\begin{equation}
MC_i^{g} (C;N,v):=\phi^g ({C\cup i};N,v)-\phi^g (C;N,v)=\phi_i (N\setminus C, v\vert_{N\setminus C}) + \psi_{\mathbf{c}i} (N_C,v_C).
\label{partner-contribution-i}
\end{equation}
\label{partner-contribution}
\end{theorem}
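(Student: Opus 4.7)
My plan is to prove \eqref{partner-contribution-i} by direct expansion, exploiting the closed formula for the marginal contribution already established in the proof of monotonicity (equation \eqref{marginal-contribution-iC}), namely
$$MC_i^g(C;N,v)=\sum_{\substack{S\subset N\setminus C\\ i\notin S}}\left[\frac{s!(n-c-s)!}{(n-c+1)!}\bigl(v(S\cup C\cup i)-v(S\cup C)\bigr)+\frac{(s+1)!(n-c-s-1)!}{(n-c+1)!}\bigl(v(S\cup i)-v(S)\bigr)\right].$$
The task then reduces to verifying that the right-hand side of \eqref{partner-contribution-i} matches this expression.

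I would next unfold the two summands on the right-hand side of \eqref{partner-contribution-i}. By formula \eqref{shapley-marginal} applied to the restricted game $(N\setminus C, v\vert_{N\setminus C})$, which has $n-c$ players,
$$\phi_i(N\setminus C, v\vert_{N\setminus C})=\sum_{S\subset N\setminus(C\cup i)}\frac{s!(n-c-s-1)!}{(n-c)!}\bigl(v(S\cup i)-v(S)\bigr).$$
Applying Definition \ref{average-com} to the merging game $(N_C,v_C)$, which has $n-c+1$ players, together with the equalities $v_C(S\cup\{\mathbf{c},i\})=v(S\cup C\cup i)$, $v_C(S\cup\mathbf{c})=v(S\cup C)$, $v_C(S\cup i)=v(S\cup i)$ and $v_C(S)=v(S)$ valid for every $S\subset N\setminus(C\cup i)$, yields
$$\psi_{\mathbf{c}i}(N_C,v_C)=\sum_{S\subset N\setminus(C\cup i)}\frac{s!(n-c-s)!}{(n-c+1)!}\bigl(v(S\cup C\cup i)-v(S\cup C)-v(S\cup i)+v(S)\bigr).$$

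The bridge between the two formulas is the elementary coefficient identity
$$\frac{s!(n-c-s-1)!}{(n-c)!}=\frac{s!(n-c-s)!}{(n-c+1)!}+\frac{(s+1)!(n-c-s-1)!}{(n-c+1)!},$$
which follows at once from $(n-c-s)+(s+1)=n-c+1$. Splitting the coefficient of each term $v(S\cup i)-v(S)$ in $\phi_i(N\setminus C, v\vert_{N\setminus C})$ according to this identity and then adding $\psi_{\mathbf{c}i}(N_C,v_C)$ term by term, the pieces $+v(S\cup i)-v(S)$ coming from the first half of the split cancel exactly the $-v(S\cup i)+v(S)$ pieces inside $\psi_{\mathbf{c}i}$, and what survives is precisely the right-hand side of the expression displayed above for $MC_i^g(C;N,v)$. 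The argument is essentially coefficient bookkeeping; the main obstacle I anticipate is simply lining up the common index set $S\subset N\setminus(C\cup i)$ across all three sums and applying the splitting identity carefully so that no term is miscounted.
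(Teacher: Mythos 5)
Your proposal is correct and follows essentially the same route as the paper: both arguments start from the explicit expression \eqref{marginal-contribution-iC} for $MC_i^g(C;N,v)$ and rest on the same coefficient identity $\frac{s!(n-c-s-1)!}{(n-c)!}=\frac{s!(n-c-s)!}{(n-c+1)!}+\frac{(s+1)!(n-c-s-1)!}{(n-c+1)!}$, the paper merely running the computation in the opposite direction by adding and subtracting $\sum_{S}\frac{s!(n-c-s-1)!}{(n-c)!}\bigl(v(S\cup i)-v(S)\bigr)$ to split $MC_i^g$ into the two target terms. The cancellation you describe is exactly the one that makes the paper's add-and-subtract step work.
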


\begin{proof}
Let us arbitrarily fix the sets $C$ and $N$ and the player $i$, $C\subsetneq N\in {\cal N}$, $i\in N\setminus C$, and let $v\in {\cal G}_N$. Then, adding and subtracting the amount
$$
\sum_{\substack{S\subset N\setminus C\\ i\notin S}} \frac{s! (n-c-s-1)!}{(n-c)!} \Bigl ( v(S\cup i)-v(S)\Bigr )
$$
to the above expression \eqref{marginal-contribution-iC} of the marginal contribution $MC_i^{g} (C;N,v)$ follows that:
\begin{align*}
MC_i^{g} (C;N,v)= & \sum_{\substack{S\subset N\setminus C\\ i\notin S}} \frac{s! (n-c-s-1)!}{(n-c)!} \Bigl ( v(S\cup i)-v(S)\Bigr ) +
\\
& \sum_{\substack{S\subset N\setminus C\\ i\notin S}} \frac{s! (n-c-s)!}{(n-c+1)!} \Bigl ( v(S\cup i\cup C)-v(S\cup C)-v(S\cup i) + v(S)\Bigr ).
\end{align*}
The first term is precisely the Shapley value of player $i$ in the restricted game $(N\setminus C, v\vert_{N\setminus C})$, where players in $C$ do not play a role. The second term can be expressed by means of the second-order difference operators for the pair of players $i,\mathbf{c}\in N_C$, as follows:
\begin{multline*}
 \sum_{\substack{S\subset N\setminus C\\ i\notin S}} \frac{s! (n-c-s)!}{(n-c+1)!} \Bigl ( v(S\cup i\cup C)-v(S\cup C)-v(S\cup i) + v(S)\Bigr )=
 \\
 \sum_{\substack{S\subset N\setminus C\\ i\notin S}} \frac{s! (n-c-s)!}{(n-c+1)!}  \Delta^2_{ic} (S;N_C,v_C)=:\psi_{\mathbf{c}i} (N_C,v_C) .
\end{multline*}
%\proofblok
\end{proof}

The previous result shows that the value of a group results from a complex combination of independence and complementarity among its members.
In particular, it is clear that  the most valuable $k$ agents from an individual point of view do not form in general the most valuable group of $k$ agents. Let us illustrate this fact with the following simple example.

\begin{example}
Let us consider the following social network represented in Figure \ref{estrellas} as an {\em undirected graph} $(N,\Gamma)$, and the {\em connectivity game} (Amer and Gimenez, 2004), which is defined as
$$
v(S)=\begin{cases} 1, & \text{if $S$ is connected in $\Gamma$ and $\vert S\vert >1$,}
\\
0, & \text{otherwise,}
\end{cases}, \quad \text{for all $S\subset N$.}
$$

\begin{figure}[h]

%\begin{center}
%{\includegraphics[scale=0.6]{ejemplo-estrellas-conectadas-valor-grupo.png}}
%\end{center}

\begin{center}
\epsfig{file=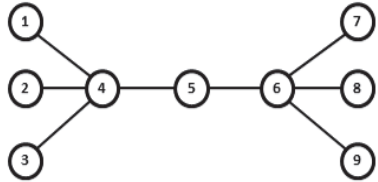,width=5cm,height=3.75cm}
\end{center}

\caption{Social Network $(N,\Gamma)$}
\label{estrellas}
\end{figure}

Here, $S\subset N$ is a {\em connected} coalition in $(N,\Gamma)$ if for every two players $i\neq j$ in $S$, $\{i,j\}\in \Gamma$, or there exists a {\em path} between them which consists of nodes in $S$. That is, there exists a sequence of nodes and edges $\pi(i,j)=\{i=i_1,i_2,\dots, i_{k-1}, i_k=j\}$, with $k\geq 2$ satisfying the property that for all $1\leq r\leq k-1$, $\{i_r,i_{r+1}\}\in \Gamma$, and $i_r\in S$, for all $2\leq r\leq k-1$.

In that case, the two more valuable players, according to their individual Shapley values are the two centers of the satellite stars, players 4 and 6.
$\phi_i(N,v)=-\frac{8}{360}$, for all the leaves $i=1,2,3,7,8,9$, $\phi_4(N,v)=\phi_6(N,v)=\frac{139}{360}$, for the two centers, and $\phi_5(N,v)=\frac{130}{360}$ for the hub which intermediates between players 4 and 6. However, the most valuable group of two agents is the one composed by the hub and one out of the two centers. In fact,
\begin{align*}
\phi^g({\{4,6\}};N,v)  & =\phi_4(N,v) + \phi_6 (N\setminus 4,v\vert (N\setminus 4)) + \psi_{46}(N,v)= \frac{139}{360} + \frac{1}{8}-\frac{1}{90} =
 \frac{1}{2},
\\[3mm]
\phi^g({\{4,5\}};N,v)  & =\phi_4(N,v) + \phi_5 (N\setminus 4,v\vert (N\setminus 4)) + \psi_{45}(N,v)=  \frac{139}{360} + \frac{1}{56}+\frac{19}{72} =
 \frac{1}{2}+\frac{47}{280} .
\end{align*}
The result of this computation can be interpreted in the following lines: although the power of player 5 depends more on the presence of player 4 than the power of 6, the average complementarity of players 4 and 5 is greater than the average complementarity of players 4 and 6.
\blok
\end{example}

\section{Application: Detecting a target group in terrorist networks}
\label{ejemplos}

We will illustrate now the application of the Shapley group value to the two terrorist networks which have been considered by Lindelauf {\em et al.} (2013): the operational network of Jemaah Islamiyah's Bali bombing and the network of hijackers of Al Qaeda's 9/11 attack.

For the first case, Jemaah Islamiyah's Bali bombing attack, the authors use the game $(N,v^{wconn})$. Let $(N,\Gamma)$ be the {\em undirected graph} which represents the terrorist network. The nodes in the  finite set $N=\{1,\dots,n\}$ are  the terrorists, whereas the {\em edges} -i.e., unordered pairs of distinct nodes- represent the known relationships between the terrorists. In Figure \ref{bali-red} the terrorist network we work with is represented.

\begin{figure}[h]

%\begin{center}
%{\includegraphics[scale=0.9]{bali-red.jpg}}
%\end{center}

\begin{center}
\epsfig{file=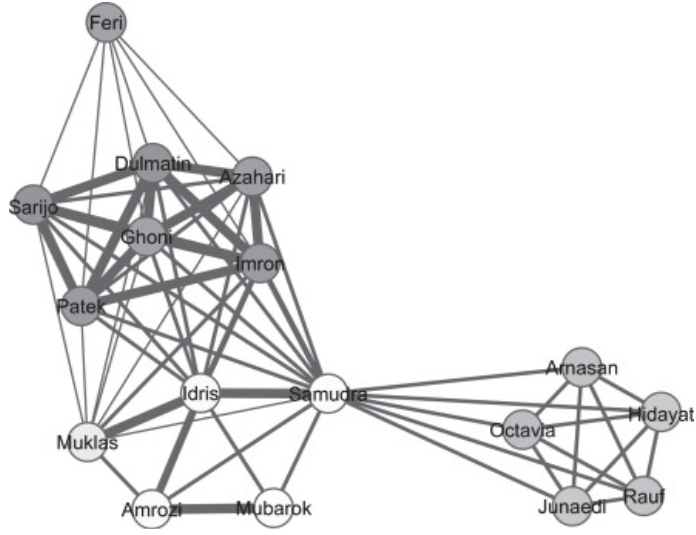,width=10cm,height=7cm}
\end{center}

\caption{Operational network of JI's Bali attack. Image taken from Lindelauf {\it et al.} (2013)}
\label{bali-red}
\end{figure}

Then, Lindelauf {\it et al.} (2013) define the game $(N,v^{wconn})$, which extends the connectivity game of Amer and Gimenez (2004) using information about relationships. In that game, a coalition must be connected in order to achieve a non-zero value. That is, players in coalition $S$ must rely only upon their own connections in order to communicate among themselves. Then, since a terrorist cell tries to prevent discovery during the planning and execution phase of an attack, and taking into account the available data about the existing relationships\footnote{The authors collected the strength of existing relationships from Koschade (2006).}, the authors define the power of a coalition as the total number of relationships that exist within
that coalition divided by the sum of the weights (representing frequency and duration of interaction) on those relationships;
\begin{equation}
 v^{wconn} (S)  = \begin{cases}\frac{\sum_{\substack{i,j\in S \\ i\neq j}} I_{ij}}{\sum_{\substack{i,j\in S \\ i\neq j}} f_{ij}}, &
 \text{if $S$ is connected in $\Gamma$ and $\vert S\vert >1$,}
\\
0, & \text{otherwise,}
\end{cases}, \quad \text{for all $S\subset N$,}
 \label{lindelauf-Bali}
\end{equation}
where $f_{ij}$ is the weight assigned to relation $\{i,j\}\in \Gamma$  in the terrorist network, $I_{ij}=1$, for every edge $\{i,j\}$ in $\Gamma$, and 0 otherwise.

We obtain the following results concerning groups from one to four individuals. Following Castro, Gomez and Tejada (2009), and taking into account that the marginal contributions in the extended connectivity games are computable in polynomial time, we have estimated with Monte Carlo simulation
 the Shapley group value of the examples, also in polynomial time. The results obtained are represented in Table \ref{resultados-bali}, which includes the records for the best groups arranged in decreasing order of importance.

\begin{table}[h]

{\scriptsize
\begin{center}
\begin{tabular}{|c|c|c|c|}
\hline
Individuals & Two agents & Three agents & Four agents
\\
\hline
Samudra, $0,358$ & $\{ \text{Samudra, Muklas}\}$, $0,453$ &
  $\{\text{Samudra, Muklas,Azahari}\}$, $0,442$ &
$\{\text{Samudra, Muklas,Feri,Azahari}\}$, $0,466$
\\
Muklas, $0,048$ & $\{ \text{Samudra, Azahari}\}$,  $0,392$ &
$\{\text{Samudra, Muklas,Sarijo}\}$, $0,435$ &
$\{\text{Samudra, Muklas,Feri,Sarijo}\}$, $0,460$
\\
Feri, $0,032$ & $\{ \text{Samudra, Sarijo}\}$,  $0,386$ &
$\{\text{Samudra, Muklas,Patek}\}$, $0,435$ &
$\{\text{Samudra, Muklas,Feri,Patek}\}$, $0,460$
\\
Azahari, $0,012$ & $\{ \text{Samudra, Patek}\}$,  $0,386$ &
$\{\text{Samudra, Feri,Azahari}\}$, $0,430$ &
$\{\text{Samudra, Muklas,Feri,Ghoni}\}$, $0,453$
\\
Sarijo, $0,005$ & $\{ \text{Samudra, Rauf}\}$,  $0,384$ &
$\{\text{Samudra, Muklas,Ghoni}\}$, $0,429$ &
$\{\text{Samudra, Muklas,Azahari,Sarijo}\}$, $0,429$
\\
\hline
\end{tabular}
\end{center}
}

\caption{Operational network of JI's Bali attack rankings}
\label{resultados-bali}
\end{table}

According to the individual rankings for the JI network based on the Shapley value, the five most valuable terrorists were, in decreasing order of importance: Samudra, Muklas, Feri, Azahari and Sarijo.

With respect of groups of two terrorists, the most valuable group is that composed by the two more important agents, $\{ \text{Samudra, Muklas}\}$. However, the second group of size two in importance is $\{ \text{Samudra, Azahari}\}$, improving the group value of $\{ \text{Samudra, Feri}\}$, which equals $0,350$, and takes the 15th place. In fact, Samudra has all direct contacts has Feri, and therefore Feri's presence in a group is somehow redundant if Samudra is already in it. According to what it is known about the attack, ''{\em Samudra, an engineering graduate, played a key role in the bombings}'', whereas Azahari is the bomb expert who was considered the ''brain'' behind the entire operation.

Again, the most valuable group of three terrorist is $\{\text{Samudra, Muklas,Azahari}\}$. However, when considering a bigger group of four terrorist, then $\{\text{Samudra, Muklas, Feri, Azahari}\}$ has the highest Shapley group value.

In the analysis of the terrorist network of the 11S, Lindelauf {\it et al.}'s  starting point was the version of the network in Figure \ref{redgrande}, whose links come from terrorists that lived or learned together (black edges) as well as some temporary links that were only activated just before the attack in order to coordinate the cells. See Krebs (2002) for further information. The authors use the game $(N,v^{wconn2})$, which uses information about the individuals:
 \begin{equation}
 v^{wconn2} (S)   =  \begin{cases} \displaystyle \sum_{i\in S} w_{i}, &
 \text{if $S$ is connected in $\Gamma$ and $\vert S\vert >1$,}
\\
0, & \text{otherwise,}
\end{cases}, \quad \text{for all $S\subset N$,}
 \label{lindelauf-w2}
\end{equation}
where $w_i$ is the weight assigned to terrorist $i\in N$. The authors also determine the terrorist weights in their analysis (see Table 5 in Lindelauf {\it et al.}, 2013).

\begin{figure}[h]

%\begin{center}
%{\includegraphics[scale=0.4]{red11S.png}}
%\end{center}

\begin{center}
\epsfig{file=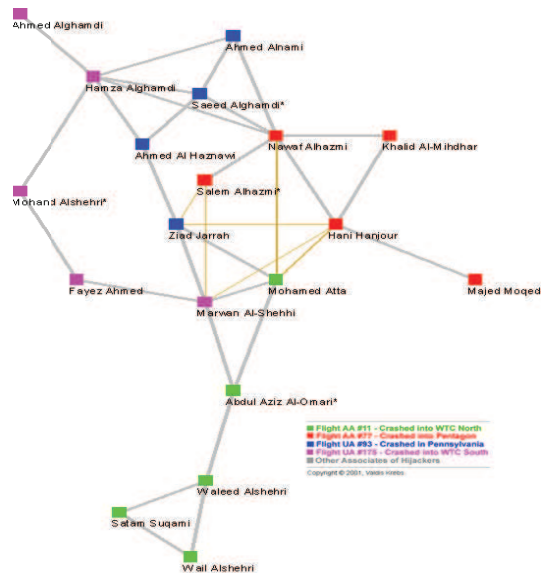,width=7cm,height=8.25cm}
\end{center}

\caption{11S Social Network. Image taken from V.E. Krebs (Copyright $\copyright$2002, First Monday)}
\label{redgrande}
\end{figure}

 The results obtained  by means of Monte carlo simulation (see Castro, Gomez and Tejada, 2009) are depicted in
 Table \ref{resultados-11S}, which includes the records for the best groups arranged in decreasing order of importance.

\begin{table}[h]

{\scriptsize
\begin{center}
\begin{tabular}{|c|c|c|}
\hline
Individuals & Two agents & Three agents
\\
\hline
A. Aziz Al-Omari (WTC-N), $6,096$ & \{Al-Omari, Al-Ghamdi\}, $7,405$ & \{Al-Omari, H. Hanjour, M. Al-Sehhi\}, $9,236$
\\
H. Al-Ghamdi (WTC-S), $5,578$ & \{Al-Omari, M. Al-Sehhi\}, $7,392$ & \{Al-Omari, H. Al-Ghamdi, Wd. Al-Shehri\}, $9,153$
\\
Wd. Al-Shehri (WTC-N), $5,563$ & \{Al-Omari, H. Hanjour\}, $7,368$ & \{M. Al-Shehhi, H. Al-Ghamdi, Wd. Al-Shehri\}, $9,140$
\\
H. Hanjour (Pent), $5,402$ & \{ Aziz Al-Omari, Wd. Al-Shehri\}, $7,324$ & \{ H. Al-Ghamdi, Wd. Al-Shehri, H. Hanjour\}, $9,074$
\\
M. Al-Shehhi (WTC-S), $2,202$ & \{Al-Omari, M. Atta\}, $7,156$ & \{ H. Al-Ghamdi,  H. Hanjour, N. Al-Hazmi\}, $8,986$
\\
M. Atta (WTC-North), $1,600$ & \{ H. Al-Ghamdi, H. Hanjour\}, $7,044$ &  \{ Al-Omari, H. Al-Ghamdi, H. Hanjour\}, $8,963$
\\
\hline
\end{tabular}
\end{center}
}

\caption{11S-hijackers network rankings}
\label{resultados-11S}
\end{table}

According to the individual rankings for the Al Qaeda's 9/11 network based on the Shapley value, the most valuable terrorists were, in decreasing order of importance: A. Aziz Al-Omari (WTC North cell), H. Al-Ghamdi (WTC South cell), Wd. Al-Shehri (WTC North cell), H. Hanjour (Pentagon cell), M. Al-Shehhi (WTC South cell) and M. Atta (WTC North cell).

With respect of groups of two terrorists, the most valuable group is that composed by the two more important agents, \{A. Aziz Al-Omari, Al-Ghamdi\}. However, the second group of size two in importance is \{Aziz Al-Omari, M. Al-Sehhi\}, improving the group value of \{Aziz Al-Omari, Wd. Al-Shehri\}, which takes the 4th place. In fact, Wd. Al-Shehri is one out of the three hijackers that crashed the plane into WTC South which forms a cycle in the terrorist network that is connected to the rest of terrorist only via A. Aziz Al-Omari, who also  belongs to the WTC North cell. Thus, Wd. Al-Shehri presence in a group is not so necessary if A. Aziz Al-Omari is already in it.

The most valuable group of three terrorist is \{A. Aziz Al-Omari, H. Hanjour, M. Al-Sehhi\}. In that case, Wd. Al-Shehri, from WTC North cell, and  H. Al-Ghamdi, from WTC South cell, are displaced. Now, H. Hanjour, who is known to be the leader of WTC South cell, has displaced H. Al-Ghamdi.

When considering a bigger group of four terrorist, then \{A. Aziz Al-Omari, H. Hanjour, M. Al-Sehhi, Wd. Al-Shehri\} has the highest Shapley group value. The first group with one representative per each cell occupies the 13th place, with a Shapley group value of $\phi^g ({D};N,w^{conn2})=10,6311$, being \{A. Aziz Al-Omari, H. Hanjour, M. Al-Sehhi, Z. Jarrah\}. Note that Z. Jarrah, who is known to be the leader of Pennsylvania cell is individually in the 8th position. The group of the four cell's leaders $L=\{\text{M. Atta, H. Hanjour, M. Al-Sehhi, Z. Jarrah}\}$ is in the 48th position with a group value of $\phi^g ({L};N,w^{conn2})=9,1313$.

Recall that Lindelauf {\it et al.} (2013) carried out the analysis on the terrorist network of the nineteen hijackers which prepared and executed the attack (distributed in four cells). We extend their analysis to a more dense network (see Figure \ref{redextendida}) which included some people which did not take direct part in the attack, but support the terrorists. In this event, the relative positions of the  hijackers change: the two poor connected terrorists from the WTC North cell, A. Aziz Al-Omari and Wd. Al-Shehri, are not so relevant in the new network, since they are now better connected through non-hijackers terrorists. According to the rankings for the Al Qaeda's 9/11 hijackers based on the individual Shapley value and the extended network\footnote{In which we have again considered the $w^{conn2}$ game, with a  zero weight for all the terrorists who do not take direct part in the attacks.}, the most valuable hijackers were, in decreasing order of importance, N. Al-Hazmi (Pentagon), H. Hanjour (Pentagon), M. Atta (WTC North), H. Al-Ghamdi (WTC South), Wd. Al-Shehri (WTC North) and Z. Jarrah (Pennsylvania).

\begin{figure}[h]

%\begin{center}
%{\includegraphics[scale=0.4]{red11S-extended.png}}
%\end{center}

\begin{center}
\epsfig{file=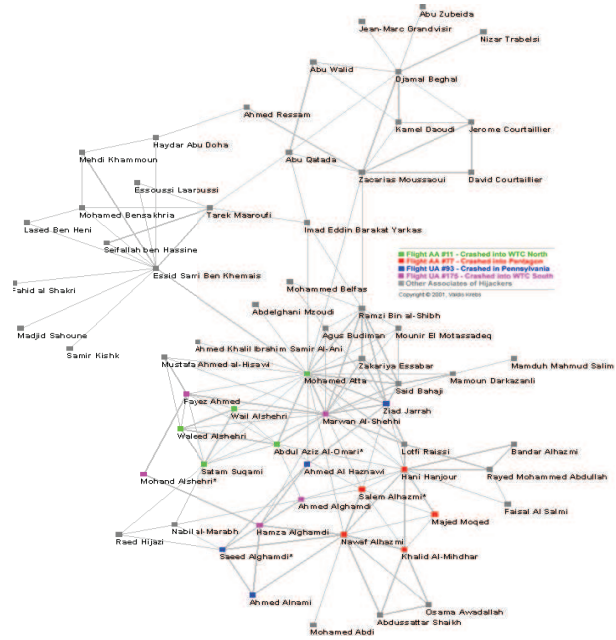,width=8cm,height=10.00cm}
\end{center}

\caption{11S extended SN. Image taken from V.E. Krebs (Copyright $\copyright$2002, First Monday)}
\label{redextendida}
\end{figure}

 The results obtained  by means of Monte carlo simulation (see Castro, Gomez and Tejada, 2009) are depicted in
 Table \ref{resultados-11S-extendida}, which includes the records for the best groups arranged in decreasing order of importance.

\begin{table}[h]

%{\scriptsize
\begin{center}
\begin{tabular}{|c|c|}
\hline
Individuals & Two agents
\\
\hline
 N. Al-Hazmi (Pent), $6,132$ & \{ N. Al-Hazmi, M. Atta\}, $8,424$
 \\
 H. Hanjour (Pent), $6,089$ & \{ N. Al-Hazmi, H. Hanjour\}, $8,342$
 \\
 M. Atta (WTC-N), $5,926$ & \{  H. Hanjour,  M. Atta\}, $8,152$
 \\
H. Al-Ghamdi (WTC-S), $1,844$ & \{ N. Al-Hazmi, H. Al-Ghamdi\}, $7,201$
 \\
 Wd. Al-Shehri (WTC-N), $1,701$ & \{ H. Hanjour, Z. Jarrah\}, $7,048$
 \\
Z. Jarrah (Penn), $1,688$ & \{ M. Atta, Z. Jarrah\}, $7,013$
 \\
\hline
\end{tabular}
\end{center}
%}

\caption{11S extended network rankings}
\label{resultados-11S-extendida}
\end{table}

With respect to groups of two terrorists, the most valuable group is  $\{ \text{N. Al-Hazmi, M. Atta}\}$,  which does not coincide with the two more important terrorists' group. In that case, however, the three and four people most valuable 3-group and 4-group are composed of the three and four, respectively, most important hijackers from an individual point of view. Those groups, $C_3$ and $C_4$, have group values of $\phi^g ({C_3};N\cup M,w^{conn2})=10,675$ and $\phi^g ({C_4};N\cup M,w^{conn2})=12,529$. Now, the group $L$ is in the twentieth position with a value of $\phi^g ({L};N\cup M,w^{conn2})=10,066$.

\section{Conclusions}
\label{conclusions}

In this paper we have introduced an extension of the classical game-theoretic concept of value to the framework of groups. We define the group value as a vector whose components measure the expectations of the coalition inside a game, i.e. an evaluation of the prospects of the group if they act together as an individual; in this sense, the group value must be understood as a {\emph valuation vector}. A key observation in our proposal is that we do not need to suppose necessarily that the agents know each other nor agree to act jointly; instead, we assume the existence of an {\emph external decision maker} that is able and willing to evaluate the value of the different groups, and in his case, find the optimal group who undertakes the appropriate collective action he is trying to promote. For instance, to find the group of a given size $k$ with the largest Shapley group value.

The main motivation of our work and in particular of the previous definitions was to obtain a (marginalistic) extension of the Shapley value to the context of groups. Following the original formulation of Shapley, who intended to apply his value to measure the expectations of players in a game, and also keeping in mind the mentioned idea of the external decision maker, we have performed the generalization of the Shapley value by means of the merging game defined by Derks and Tijs in 2000. In this work, the authors develop a concept of super-player, who acts as a proxy in a merging game of all the players of the coalition whose value we do want to compute.

In order to show that our extension of Shapley is valid and interesting, we have proved that the natural generalizations to the group framework of the usual properties of the (individual) Shapley value hold for the Shapley group value. Moreover, we offer an axiomatic characterization of the Shapley group value which, although includes the group version of null-player and linearity properties, cannot be directly deduced from the usual
characterizations in the individual context. We analyze the implications of the considered axioms. In particular, $G$-SPB leads players to act as one representative in pure bargaining games when all players are strictly necessary, and this combined with $G$-null player and $G$-CBC, lead to the same strategic behavior in all pure bargaining games $(N,u_S)$, $S\subset N\in {\cal N}$. Then, linearity extends this behavior to every game. Moreover, considering an alternative formulation of $G$-SPB we characterize an additive Shapley group valuation. Therefore, we conclude that in case we want to use a group valuation with the Shapley value standards that accounts for the synergy among group members, then we must use the Shapley group value we define. It is likely other axiomatic descriptions of the Shapley group value may exist.

Based on our proposal, and following ideas of Segal, we have elaborated about the ideas of complementarity and substitutability that concerns to profitability of acting as a group. However, it is not easy to find a straight relation between complementarity/substitutability and profitability (or not) because this relation only could be found by means of Segal third derivative, and not using only the second derivative.

We finish our paper by testing the validity of our methods in the identification of influent groups inside a real terrorist network. The flexibility of the proposed approach allows to suppose that our measure will be effective and usual in a variety of contexts and making use of different interpretations of the Shapley group value. Let us think for instance in two of them.

In a global economy context, in which many firms present a complex interlocked shareholding structure, it may be difficult to asses a firm's controllers. However, ``{\em a common intuition among scholars and in media sees the global economy as being dominated by a handful of powerful transnational corporation}'' (Vitali, Glattfelder and Battiston, 2011). In that case, following a game theoretical approach, we can make use of the Shapley group value to detect a small group of firms which in fact have a dominating power. The reader is referred to Crama and Leruth (2013) for an interesting review about this approach.

Another relevant application arises in the context of transportation network's operation, where the identification of sets of stations to defense (or maintain) in order to maximally preserve the network's operation is a relevant question to network protection against natural and human-caused hazards, which has become a topical research topic in engineering and social sciences, as Liu, Fan and Ordo\~nez (2009) point out. In that case, following a game theoretical approach, the Shapley group value can serve security agencies for selecting a group of stations to defense (or maintain).

However, it should be remarked that the problem of finding the optimal group, according to some prearranged criteria, is a combinatorial problem that merits a more careful study. We are aware of the need of heuristics in order to apply the Shapley group value to the group selection problem.

\section*{Appendix}

\vspace*{3mm}
In the following we should check that all the properties considered in Theorem \ref{caracterizacion} are necessary to guarantee the uniqueness of the Shapley group value $\phi^g$.

\vspace*{5mm}
\noindent \textbf{$G$-null player}. Let $\alpha\in (0,1)$. Define a group value $\xi^g$ in the following manner.
Let $N$ be any finite set of players $N\in {\cal N}$, and let $v\in {\cal G}_N$. If $(N,v)$ is a null game, $\xi^g(C;N,v)$=0 for every $C\in N$. Given a non-null unanimity game $(N,u_S)$, with $S\subsetneq N$, define $\xi^g(C;N,u_S)$ as
\begin{equation}
\xi^g(C;N,u_S)=\begin{cases} \ds \sum_{k=0}^{c-1} \alpha^{n-k}, & \text{ if $C\cap S=\emptyset$,}
\\
\ds \frac{1}{s-\vert S\cap C\vert +1} + \sum_{k=0}^{n-s-1} \alpha^{n-k}, & \text{ if $C\cap S\neq \emptyset$.}
\end{cases}
\end{equation}
For $S=N$, take $\xi^g(C;N,u_N)=\frac{1}{n-c+1}$ for all $C\subset N$, and then extend the group value to ${\cal G}_N$ by linearity. It is clear that $\xi^g (i;N,u_S)=\alpha^n >0$ for all $i\notin S$. So, $G$-null player does not hold. Let us check that $\xi^g$ verifies $G$-CBC over the class of unanimity games. If $(N,u_N)$, then $G$-CBC condition \eqref{CBC-corta} trivially holds. Let $(N,u_S)$ be a unanimity game with $S\subsetneq N$. If $\vert S\vert =n-1$, then
 two cases are possible:
 \begin{enumerate}[a)]
 \item
 If $i,j\in S$, then $\xi^g ({C\cup i};N,u_S)= \frac{1}{s-\vert S\cap C\vert} + \sum_{k=0}^{n-s-1} \alpha^{n-k}=\xi^g ({C\cup j};N,u_S)$ and \eqref{CBC-corta} holds, since $u_S\vert_{N\setminus i}$ and $u_S\vert_{N\setminus j}$ are null games.
 \item
 If $i\in S$ and $j\notin S$, then $S=N\setminus j$, $u_S\vert_{N\setminus i}\equiv 0$ and $u_S\vert_{N\setminus j}$ is a unanimity game w.r.t. the grand coalition $N\setminus j$. Thus \eqref{CBC-corta} holds:
 \begin{multline*}
 \xi^g ({C\cup i};N\setminus j, u_{N\setminus j}\vert_{N\setminus j})-\xi^g (C;N\setminus j, u_{N\setminus j}\vert_{N\setminus j}) =
 \\[3mm]
 \frac{1}{(n-1)-(c+1)+1} -\frac{1}{(n-1)-c+1}=\xi^g ({C\cup i};N, u_{N\setminus j})-\xi^g ({C\cup j};N, u_{N\setminus j}) .
 \end{multline*}
 \end{enumerate}
If $\vert S\vert <n-1$, then  three cases are possible:
 \begin{enumerate}[a)]
 \item
 If $i,j\in S$, then $\xi^g ({C\cup i};N,u_S)= \frac{1}{s-\vert S\cap C\vert} + \sum_{k=0}^{n-s-1} \alpha^{n-k}=\xi^g ({C\cup j};N,u_S)$ and \eqref{CBC-corta} holds.
 \item
 If $i\in S$ and $j\notin S$, then
 $$
 \xi^g ({C\cup i};N\setminus j, u_{S}\vert_{N\setminus j}) = \frac{1}{s-\vert S\cap C\vert} + \sum_{k=1}^{n-s-1} \alpha^{n-k}
 $$
 and
 $$
 \xi^g (C;N\setminus j, u_{S}\vert_{N\setminus j}) = \begin{cases}
 \ds \sum_{k=1}^{c} \alpha^{n-k}, & \text{ if $C\subset N\setminus S$,}
 \\
  \ds \frac{1}{s-\vert S\cap C\vert +1} + \sum_{k=1}^{n-s-1} \alpha^{n-k}, & \text{ otherwise.}
 \end{cases}
 $$
 Thus, if $C\subset N\setminus S$:
 \begin{multline*}
 \xi^g ({C\cup i};N\setminus j, u_{S}\vert_{N\setminus j})-\xi^g (C;N\setminus j, u_{S}\vert_{N\setminus j}) =
  \\[3mm]
  \frac{1}{s}+ \sum_{k=1}^{n-s-1} \alpha^{n-k} - \sum_{k=1}^{c} \alpha^{n-k} =
 \xi^g ({C\cup i} ;N, u_{S})-\xi^g ({C\cup j};N, u_{S})
 \end{multline*}
 If $C\cap S\neq \emptyset$, then
 \begin{multline*}
 \xi^g ({C\cup i};N\setminus j, u_{S}\vert_{N\setminus j})-\xi^g ({C};N\setminus j, u_{S}\vert_{N\setminus j}) =
 \\[3mm]
 \frac{1}{s-\vert S\cap C\vert}+ \sum_{k=1}^{n-s-1} \alpha^{n-k} - \Bigl (\frac{1}{s-\vert S\cap C\vert +1}+  \sum_{k=1}^{n-s-1} \alpha^{n-k} \Bigr ) =
  \xi^g ({C\cup i};N, u_{S})-\xi^g ({C\cup j};N, u_{S})
 \end{multline*}
 \item
 If $i,j\in N\setminus S$, then $C\cup i\subset N\setminus S$ if, and only if, $C\cup j\subset N\setminus S$ and, therefore condition \eqref{CBC-corta} can be easily checked.
 \end{enumerate}
Now, since $G$-CBC condition is linear $\xi^g$ satisfies it over $\bigcup_{n\geq 1} {\cal G}_n$

\vspace*{5mm}
\noindent \textbf{$G$-linearity}. Let $\xi^g$ be another group value which is defined in the following way for each $N\in {\cal N}$, and every $v\in {\cal G}_N$.

\begin{itemize}
\item[$(A_1)$] If there is at least a null player in $N$, or $(N,v)$ is the unanimity game with respect to the grand coalition $N$, then $\xi^g(C;N,v)=\phi^g(C;N,v)$ for every group $C$ in $N$.
\item[$(A_2)$] Otherwise, $\xi^g(C;N,v)=\phi^g (C;N,v)+k$, being $k\neq 0$ a fixed constant.
\end{itemize}
It is easily checked that $G$-null-player and $G$-SPB hold for $\xi^g$. We will check that the property of coalitional balanced contributions $G$-BMC also holds for this value. Note that all differences in \eqref{CBC-corta} match $\xi^g (D;L,w)$ with $\xi^g (D';L,w)$, for some coalition $L\in \{ N,N\setminus i, N\setminus j\}$ and some game $w\in \{ v,v_{-i},v_{-j}\}$.

Taking into account that $\phi^g=\xi^g$ in case $(N,v)$ is some of the games in the first case $(A_1)$, and in the second one the $k's$ cancel, it holds:
\begin{align*}
 \xi^g ({C\cup i};N,v)- \xi^g({C\cup j} ;N,v) &=  \phi^g ({C\cup i};N,v)- \phi^g ({C\cup j};N,v),
\\[3mm]
 \xi^g ({C\cup i};N\setminus j,v_{-j})- \xi^g (C;N\setminus j,v_{-j}) &=  \phi^g ({C\cup i};N\setminus j,v_{-j})- \phi^g ({C};N\setminus j,v_{-j}),
 \\[3mm]
 \xi^g ({C\cup j};N\setminus i,v_{-i})- \xi^g ({C};N\setminus i,v_{-i}) &=  \phi^g ({C\cup j};N\setminus i,v_{-i})- \phi^g ({C};N\setminus i,v_{-i}),
 \end{align*}
$G$-CBC property holds for $\phi^g$, and so the property does so for $\xi^g$. Trivially, $\xi^g$ fails to verify $G$-linearity and so we are done.

%Note that we can modify $\xi^g$ defining $\xi^g(i;N,v)=\Phi_i (N,v)$, for all $i\in N$, and $\xi^g(N;N,v)=v(N)$, for all $N\subset \mathbb{N}$, and %the same result holds.

\vspace*{5mm}
\noindent \textbf{$G$-CBC}. Define a group value $\xi^g$ in the following manner. Let $N$ be any finite set of players $N\in {\cal N}$, and let $v\in {\cal G}_N$. If $(N,v)$ is a null game, $\xi^g(C;N,v)$=0 for every $C\in N$. Given a non-null unanimity game $(N,u_S)$ with $S\subsetneq N$, define $\xi^g(C;N,u_S)$ as
\begin{equation}
\xi^g(C;N,u_S)=\begin{cases} 0 & \text{ if $C\cap S=\emptyset$,} \\
 1, & \text{ if $S\subset C$,}
\\
|S\cap C|\times |S\backslash C| & \text{ if $C\cap S\neq \emptyset$ and  $C\cap S\neq S,$}
\end{cases}
\end{equation}
 $\xi^g(C;N,u_N)=\frac{1}{n-c+1}$, for all $C\subset N$, and then extend the value by linearity to ${\cal G}_N$.

Observe that all properties but $G$-CBC hold. The unique one that needs a bit of discussion is the $G$-null player property, which holds when considering the base of unanimity games because in $u_S$ the null players are precisely the players outside $S$, and therefore:
\begin{align*}
\xi^g ({C\cup i};N,u_S) & = 0= \xi^g ({C};N,u_S), \text{ if $S\cap C=\emptyset,$}
\\
\xi^g ({C\cup i};N,u_S) & = 1= \xi^g ({C};N,u_S), \text{ if $S\subset C$, and}
\\
\xi^g ({C\cup i};N,u_S) & = |S\cap (C\cup i)|\times |S\backslash (C\cup i)|=|S\cap C|\times |S\backslash C|= \xi^g ({C};N,u_S), \text{ otherwise,}
\end{align*}
for every player $i\notin S$. Then, taking into account that the Harsanyi dividend $c_S (N,v)$ of any coalition $S$ containing null players in the game $(N,v)$ equals zero, $G$-null player property holds for any $v\in {\cal G}_N$, for all $N\in {\cal N}$.

Let us check by means of a concrete example that the $G$-CBC property fails in this case. Consider $(N,u_S)$ with $N=\{1,2,3\}$, and $S=\{1,2\}$. In the notation of the property, take $C=\{1\}$, $i=2$, and $j=3$. Then:
$$
\begin{array}{c}
\xi^g({\{1,2\}};N,u_S)  =1, \; \xi^g({\{1,3\}};N,u_S)=1\times 1=1,
 \\[3mm]
 \xi^g({\{1,2\}};N\backslash 3, u_S\vert_{N \backslash 3})=1, \; \xi^g({1};N\backslash 3, u_S\vert_{N \backslash 3})=1/2,
 \end{array}
$$
and $\xi^g({\{1,3\}};N\backslash 2, u_S\vert_{N \backslash 2})=0=\xi^g({1};N\backslash 2, u_S\vert_{N \backslash 2})$, because the game $(N\backslash 2, u_S\vert_{N \backslash 2})$ is null. It is clear now that the two sides of the equalities that define the property do not coincide in this case.

%Note that we can modify $\xi^g$ defining $\xi^g(i;N,u_S)=\Phi_i (N,v)$, for all $i\in N$, and $\emptyset\neq S\subsetneq N$, for all $N\subset %\mathbb{N}$, and the same result holds.  Probably it is easy to find more examples by defining the value over $C\cap S$ (when $C\cap S\neq \emptyset$) %as another appropriate function of $(|C\cap S|,|S\backslash C|)$.

\vspace*{5mm}
\noindent \textbf{$G$-symmetry over pure bargaining games}. Consider the additive Shapley value ${\cal A}\phi^g(C;N,v):=\sum_{i\in C} \phi_i(N,v)$.
Observe that all properties but $G$-SPB hold and, clearly, $G$-SPB fails, so we are done.
\blok

\end{document}